\newtheorem{thm}{Theorem}[section]
\newaliascnt{lma}{thm}
\newtheorem{lma}[lma]{Lemma}
\newaliascnt{prop}{thm}
\newtheorem{prop}[prop]{Proposition}
\newaliascnt{cor}{thm}
\newaliascnt{conj}{thm}
\newtheorem{conj}[conj]{Conjecture}
\newaliascnt{rmk}{thm}
\newtheorem{rmk}[rmk]{Remark}
\newaliascnt{defn}{thm}
\newcommand{\R}{{\mathbb R}}
\numberwithin{equation}{section}
\tikzset{ > = stealth'}
\begin{document}

\title[Monotonicity properties of the Robin torsion function]{
Monotonicity properties of the Robin torsion function in a class of symmetric planar domains
}




\author{Qinfeng Li}
\address{School of Mathematics, Hunan University, Changsha, P.R. China.}
\email{liqinfeng1989@gmail.com}

\author{Juncheng Wei}
\address{Department of Mathematics, Chinese University of Hong Kong, Shatin, Hong Kong}
\email{wei@math.cuhk.edu.hk}

\author{Ruofei Yao}
\address{School of Mathematics, South China University of Technology, Guangzhou, P.R. China.}
\email{yaorf5812@126.com}

\begin{abstract}

We prove the monotonicity property of the Robin torsion function in a smooth planar domain $\Omega$ with a line of symmetry, provided that the Robin coefficient $\beta$ is greater than or equal to the negative of the boundary curvature $\kappa$ (i.e., $\beta \geq -\kappa$ on $\partial\Omega$).
We also show that this condition is, in a certain sense, sharp by constructing a counterexample.

\end{abstract}

\keywords{Monotonicity; Robin torsion function; Continuity method}
\subjclass[2020]{\text{Primary 35B50, Secondary 74B05, 35B05}}

\date{\today}
\maketitle



\section{Introduction}

In this paper, we study the following simple torsion equation 
\begin{equation}\label{eq101}
\begin{cases}
\Delta {u} = - 1 & \text{in } \Omega, \\
\partial_{\nu} {u} + \beta {u} = 0 & \text{on } \partial\Omega,
\end{cases}
\end{equation}
where $\Omega \subset \mathbb{R}^n$ is a bounded Lipschitz domain, $\nu$ denotes the unit outward normal on $\partial\Omega$, and $\beta > 0$ is a constant. The solution to \eqref{eq101} is unique and is often referred to as the Robin torsion function. It can be viewed as the steady-state temperature under the condition of a uniformly distributed heat source inside $\Omega$, and the boundary condition models the convection heat transfer mode.

From the variational point of view, the $\beta$-Robin torsion function is the minimizer of the following variational problem:
\begin{equation} 
{J}_{\beta}(\Omega): \, = \inf \left\{
\frac{1}{2}\int_{\Omega} |\nabla {u}|^{2} d{x}
+ \frac{\beta}{2} \int_{\partial\Omega} {u}^{2} d\sigma
- \int_{\Omega} {u} d{x}: \, {u} \in H^{1}(\Omega)
\right\}.
\end{equation}
The critical shape for ${J}_{\beta}(\cdot)$ under a fixed volume constraint satisfies the following overdetermined system \cite{LYh23}:
\begin{equation} \label{eq104Stat}
\begin{cases}
- \Delta {u} = 1 & \textit{in } \Omega, \\
\frac{\partial {u}}{\partial \nu} + \beta {u} = 0 & \textit{on } \partial\Omega, \\
- \beta^{2} {u}^{2} + \frac{1}{2}|\nabla {u}|^{2} + \frac{\beta}{2}{u}^{2} {H} - {u} = \textit{constant} & \textit{on } \partial\Omega,
\end{cases}
\end{equation}
where ${H}$ is the mean curvature of $\partial\Omega$.
Recall that when $\beta = \infty$, \eqref{eq104Stat} becomes Serrin's seminal overdetermined system, and the solution must be a ball via the moving plane method, see \cite{Ser71}. On the other hand, for any $\beta > 0$ and under a volume constraint, it is known that the ball is the unique minimizer of ${J}_{\beta}(\cdot)$; see \cite{BG15} and \cite{ANT23} for two different proofs. Therefore, this motivates us to make the following conjecture:

\begin{conj} \label{conj11}
For any $\beta > 0$, the domain for which the overdetermined system \eqref{eq104Stat} admits a solution must be a ball.
\end{conj}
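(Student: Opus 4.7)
The plan is to attack \cref{conj11} by a Weinberger--Payne $P$-function argument refined for the Robin boundary structure, using the third equation of \eqref{eq104Stat} to supply the boundary information that Dirichlet data provides in Serrin's setting. Classically, Weinberger's proof of Serrin's theorem rests on the subharmonicity of $P := |\nabla u|^{2} + \tfrac{2}{n} u$ in $\Omega$ (which follows from $\Delta u = -1$ together with $|D^{2} u|^{2} \geq (\Delta u)^{2}/n$) and on the identity $P \equiv c^{2}$ on $\partial\Omega$, the two facts implying via a Pohozaev-type integral identity that this Cauchy--Schwarz inequality is saturated pointwise. For the Robin case I would keep $P = |\nabla u|^{2} + \tfrac{2}{n} u$, use the decomposition $|\nabla u|^{2} = \beta^{2} u^{2} + |\nabla_{T} u|^{2}$ on $\partial\Omega$, and rewrite the third overdetermined condition as
\[
\tfrac{1}{2}|\nabla_{T} u|^{2} - \tfrac{1}{2}\beta^{2} u^{2} + \tfrac{\beta}{2} H u^{2} - u = \text{const} \quad \text{on } \partial\Omega,
\]
which is the actual substitute for ``$P \equiv c^{2}$ on $\partial\Omega$'' in the Serrin template.

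The concrete steps would be: (i) establish $\Delta P \geq 0$ in $\Omega$; (ii) derive Pohozaev-type identities by multiplying $-\Delta u = 1$ against $x \cdot \nabla u$ and separately against $u$ (and, if necessary, $u^{2}$), integrating by parts and recording the resulting boundary terms in $u$, $u^{2}$, $|\nabla_{T} u|^{2}$, and $H u^{2}$; (iii) combine these identities with the Robin condition $\partial_{\nu} u = -\beta u$ and the rewritten third equation to express all boundary contributions in terms of $\beta$ and the constant from the overdetermined system; (iv) extract from the resulting scalar equality that $\int_{\Omega} (|D^{2} u|^{2} - 1/n)\, dx = 0$, so that Cauchy--Schwarz is saturated pointwise. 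Then $D^{2} u = -\tfrac{1}{n}\, \mathrm{Id}$, hence $u(x) = a - \tfrac{1}{2n} |x - x_{0}|^{2}$, and substituting back into the Robin condition forces $|x - x_{0}|$ to be constant on $\partial\Omega$, so $\Omega$ is a ball.

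The principal obstacle is step~(iii): unlike Serrin's case, neither $u$ nor $|\nabla u|^{2}$ is constant on $\partial\Omega$, so the boundary integrals from Pohozaev do not collapse cleanly. The curvature term $\tfrac{\beta}{2} H u^{2}$ must be absorbed by a matching curvature contribution arising from the normal projection $x \cdot \nu$ in the Pohozaev identity, and the cancellation appears to demand testing against more than one multiplier while tracking the signs of $|\nabla_{T} u|^{2}$ and of the deviation of $u$ from its boundary mean. A fallback strategy is to adapt the moving-plane method directly to \eqref{eq104Stat}: reflect $\Omega$ across a hyperplane $T_{\lambda}$, compare $u$ with its reflection $u^{\ast}$ in the cap, and apply the strong maximum principle together with a Hopf-type boundary lemma compatible with Robin data, using the third overdetermined condition as the extra input that closes the touching-point analysis. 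In the planar setting, the monotonicity theorem proved in this paper---which applies once a symmetry line is identified and $\beta \geq -\kappa$ holds---could potentially replace the classical Hopf step in that moving-plane argument, giving at least a dimension-two proof of the conjecture.
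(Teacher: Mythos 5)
The statement you have been asked to prove is \cref{conj11}, which the paper explicitly labels as a \emph{conjecture} and leaves open: the paper does not prove it, and the contribution of the paper (\cref{thm12main}) is only a partial step in its direction---a monotonicity result for the Robin torsion function in a symmetric planar domain, not a symmetry result for an arbitrary domain satisfying \eqref{eq104Stat}. So there is no proof in the paper against which to measure your proposal; what you have written is a research plan, and you are candid that it is incomplete.

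The gap you identify in step~(iii) is genuine and, as far as is currently known, is exactly where this approach stalls. In Weinberger's proof the whole argument hinges on $P = |\nabla u|^{2} + \tfrac{2}{n}u$ being \emph{constant} on $\partial\Omega$, which follows from $u = 0$ and $|\nabla u| = c$ there; that constancy is what lets the maximum principle plus a single Rellich--Pohozaev identity force $P$ to be constant throughout $\Omega$. In the Robin setting $u$ is not constant on $\partial\Omega$, $|\nabla u|^{2} = \beta^{2}u^{2} + |\nabla_{T}u|^{2}$ varies, and the third condition of \eqref{eq104Stat} mixes in the mean curvature $H$, which is a priori unknown geometric data. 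No modification of $P$ by boundary-intrinsic quantities is known to restore the boundary constancy, and the Pohozaev boundary terms do not visibly cancel the $\tfrac{\beta}{2}Hu^{2}$ contribution. Your fallback of moving planes runs into the obstruction the paper itself flags in the Introduction: Robin data gives no control on the direction of $\nabla u$ on $\partial\Omega$, so the standard Hopf/Serrin corner comparison at the touching point does not go through. The paper's \cref{thm12main} assumes symmetry and proves monotonicity; it cannot be used to \emph{produce} a symmetry hyperplane, which is what the moving-plane route would require. In short: your outline is a reasonable and well-informed attack, but both branches of it currently terminate in the same open difficulties that keep \cref{conj11} a conjecture.
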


Recall that for the case $\beta = \infty$ (the Dirichlet case), the moving plane method can be implemented. However, for Robin boundary conditions, the moving plane method is not applicable, since we lack information about the direction of the gradient at the boundary. 

To approach \autoref{conj11}, the first step is to find a method that can replace the moving plane method in studying the geometric properties of solutions for Robin boundary problems. A standard application of the moving plane method is to prove that solutions are monotone in one half of a symmetric domain. Thus, \autoref{conj11} motivates us to consider the following fundamental question for Robin problems:

\textbf{Question A.}
For a planar domain which is symmetric about one coordinate axis and convex in the direction of the other coordinate axis, must the Robin torsion function be monotone in the half domain?

This question is important for understanding geometric properties of solutions subject to Robin boundary conditions. As far as we are aware, there are no results in the literature on the monotonicity of the Robin torsion function, even for symmetric convex domains. 

Before stating our monotonicity results on \textbf{Question A}, we recall some known results on another geometric property, namely, the convexity property of the $\beta$-Robin torsion function, as a counterpart. If $\Omega$ is convex, it has been proved that for large $\beta$, the level sets of the $\beta$-Robin torsion function are convex; see \cite{CF21}. On the other hand, for small $\beta$, this is generally not the case; see \cite{ACH20}. Therefore, there is a level set convexity breaking phenomenon for Robin torsion functions, although the breaking threshold $\beta_{*}$ is still unknown. In contrast, the level sets of the classical torsion function with Dirichlet boundary condition are always convex; see \cite{BL76} and \cite{ML71}. Such phenomena suggest that the geometric properties of the classical torsion function and the $\beta$-Robin torsion function are similar for large $\beta$, while for small $\beta$, this similarity may not be preserved.

Concerning \textbf{Question A}, we indeed have the following result.

\begin{thm} \label{thm12main}
Let $\Omega$ be a bounded connected domain in the plane that is symmetric with respect to the horizontal coordinate axis and convex in the vertical direction. Let ${u}$ be the $\beta$-Robin torsion function (i.e., the solution of \eqref{eq101}) in $\Omega$, with $\beta$ a positive constant. Assume that $\Omega$ is smooth and that the Robin coefficient $\beta$ satisfies
\begin{equation} \label{eq106curvature}
\beta \geq - \min_{\partial\Omega} \kappa, 
\end{equation}
where $\kappa$ is the curvature of $\partial\Omega$.
Then ${u}$ must be symmetric and monotone in the half domain,
\begin{equation} \label{eq107}
{x}_{2} \frac{\partial {u}}{\partial {x}_{2}} < 0 \quad \text{in } \overline{\Omega} \cap \{ {x}_{2} \neq 0 \}.
\end{equation}
\end{thm}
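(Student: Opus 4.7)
The plan is to prove the theorem by a continuity argument in the Robin parameter $\beta$, keeping the domain $\Omega$ fixed. First, uniqueness of the Robin torsion function together with the horizontal symmetry of $\Omega$ forces $u(x_{1},x_{2}) = u(x_{1},-x_{2})$; in particular, $v := \partial_{x_{2}} u$ satisfies $\Delta v = 0$ in $\Omega$ and $v \equiv 0$ on $\Omega \cap \{x_{2} = 0\}$. Writing $\Omega^{+} := \Omega \cap \{x_{2} > 0\}$, the theorem reduces to showing $v < 0$ in $\Omega^{+}$.

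Set $\beta_{0} := -\min_{\partial\Omega} \kappa$. For each $\beta \in [\beta_{0}, \infty]$ (with $\beta = \infty$ corresponding to the Dirichlet case) let $u_{\beta}$ be the corresponding torsion function and $v_{\beta} := \partial_{x_{2}} u_{\beta}$. Consider the set
\[
E := \{\beta \in [\beta_{0}, \infty] \,:\, v_{\beta} \leq 0 \text{ in } \overline{\Omega^{+}}\}.
\]
I would show $E = [\beta_{0}, \infty]$ in three steps: $\infty \in E$, via the classical moving plane method applied to the Dirichlet torsion function, which in fact yields the strict inequality $v_{\infty} < 0$ in $\Omega^{+}$; $E$ is closed, by continuous dependence of $u_{\beta}$ on $\beta$ in $C^{2}(\overline{\Omega})$; and $E$ is relatively open in $[\beta_{0}, \infty]$. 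Connectedness then gives $E = [\beta_{0}, \infty]$, and the strict inequality \eqref{eq107} falls out of the analysis for openness.

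The main obstacle lies in the openness step. Given $\beta^{*} \in E$, the first task is to upgrade $v_{\beta^{*}} \leq 0$ to $v_{\beta^{*}} < 0$ on $\overline{\Omega^{+}} \setminus \{x_{2} = 0\}$. In the interior this is immediate from the strong maximum principle applied to the harmonic function $v_{\beta^{*}}$. The delicate point is the exclusion of boundary zeros: if $v_{\beta^{*}}(x_{0}) = 0$ at some $x_{0} \in \partial\Omega \cap \{x_{2} > 0\}$, then Hopf's boundary point lemma forces $\partial_{\nu} v_{\beta^{*}}(x_{0}) > 0$. On the other hand, differentiating the Robin condition $\partial_{\nu} u + \beta u = 0$ along $\partial\Omega$ and using the Frenet relation $\partial_{s}\nu = -\kappa\tau$ gives $D^{2} u[\nu,\tau] = (\kappa - \beta) u_{\tau}$, which together with the boundary decomposition $\Delta u = u_{\nu\nu} + \kappa u_{\nu} + u_{ss}$ should produce an explicit expression for $\partial_{\nu} v_{\beta^{*}}(x_{0})$ whose sign is controlled by $\kappa + \beta$. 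The hypothesis $\beta \geq -\kappa$ is precisely what is needed to force $\partial_{\nu} v_{\beta^{*}}(x_{0}) \leq 0$, contradicting Hopf. The two corner points where $\partial\Omega$ meets $\{x_{2} = 0\}$, and the behaviour where the outward normal becomes horizontal (so that $\nu_{2} = 0$), would be treated via Serrin's corner lemma. Once strict negativity of $v_{\beta^{*}}$ holds up to $\partial\Omega \cap \{x_{2} > 0\}$, openness of $E$ follows from the continuous dependence of $v_{\beta}$ on $\beta$, using that $v_{\beta} \equiv 0$ on $\Omega \cap \{x_{2} = 0\}$ for every $\beta$ by symmetry.
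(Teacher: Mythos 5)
Your proposal is structurally valid and takes a genuinely different route from the paper: you deform the Robin parameter $\beta$ on a fixed domain, starting from the Dirichlet endpoint $\beta=\infty$ (where the moving-plane method is available), whereas the paper keeps $\beta$ fixed and deforms the domain from the unit ball (where the torsion function is explicit) through a continuous family of domains satisfying the hypotheses (their Lemma \ref{lma23}). The key technical step — ruling out a boundary zero of $v_{\beta}=\partial_{x_2}u_{\beta}$ on $\partial\Omega\cap\{x_2>0\}$ by combining Hopf's lemma with the tangentially-differentiated Robin condition — is essentially identical to the paper's Proposition \ref{prop22}, Part 1, and is exactly where the hypothesis $\beta+\kappa\geq 0$ enters. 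What each route buys: yours avoids the domain-deformation lemma but picks up the nontrivial task of justifying $C^{2}(\overline\Omega)$ convergence $u_{\beta}\to u_{\infty}$ as $\beta\to\infty$ (needed to show $E$ contains a full left neighborhood of $\infty$), while the paper avoids the Dirichlet limit entirely since its initial configuration is fully explicit.

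Two places where your sketch is thin enough that a careful reader could object. First, your displayed identity $D^{2}u[\nu,\tau]=(\kappa-\beta)u_{\tau}$ has a sign slip: with outward normal $\nu$ and the sign convention $\kappa\geq 0$ on convex boundaries, differentiating $u_{\nu}+\beta u=0$ along $\partial\Omega$ gives $D^{2}u[\tau,\nu]=-(\kappa+\beta)u_{\tau}$, and it is this $\kappa+\beta$ that drives the contradiction — your prose gets the controlling quantity right even though the formula doesn't. Second, and more substantively, strict negativity of $v_{\beta^{*}}$ up to $\partial\Omega\cap\{x_2>0\}$ is not by itself sufficient for openness of $E$. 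Near the symmetry axis $v_{\beta}$ degenerates to zero for every $\beta$, so to keep $v_{\beta}\leq 0$ under a small perturbation of $\beta$ you also need a strict sign on $\partial_{x_2}v_{\beta^{*}}=\partial_{x_2x_2}u_{\beta^{*}}$ along the entire closed segment $\overline\Omega\cap\{x_2=0\}$, including at the two vertices $z_{L},z_{R}$ where the normal is horizontal. In the interior of the segment this is just Hopf applied to the harmonic $v_{\beta^{*}}$, but at the vertices it requires a corner argument (you mention Serrin's lemma, which is the right tool, but the analysis is genuinely delicate — the paper devotes all of Part 2 of Proposition \ref{prop22} to it, including a third-order differentiation of the Robin condition). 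With those two points filled in, your openness step matches the paper's and the continuity argument closes.
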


As a consequence, for any convex symmetric domain and any $\beta > 0$, the $\beta$-Robin torsion function is monotone in the half of the domain lying on one side of the axis of symmetry. We emphasize that the domain is allowed to be nonconvex: $\min_{\partial\Omega}\kappa \geq -\beta$. To the best of our knowledge, this is the first monotonicity result with an explicit quantitative dependence on curvature. 

The proof of \autoref{thm12main} is based on the continuity method via domain deformation. By adapting the proof, one can show that similar results hold for the Neumann torsion function, which is the solution (up to an additive constant) to the following equation:
\begin{equation}
\begin{cases}
\Delta {u} = - 1 & \text{in } \Omega, \\
\partial_{\nu} {u} = - \frac{|\Omega|}{P(\Omega)} & \text{on } \partial\Omega,
\end{cases}
\end{equation}

The condition \eqref{eq106curvature} arises naturally in \autoref{thm12main} from differentiating the Robin boundary condition, an essential step in our proof. One may ask whether this assumption can be removed, or whether it is optimal. The next theorem shows that symmetry alone does not guarantee monotonicity. 

\begin{thm} \label{thm13CouterExam}
For any $\beta>0$, there exists a smooth, nonconvex planar domain $\Omega$, symmetric with respect to the horizontal axis and convex in the vertical direction, such that the $\beta$-Robin torsion function ${u}$ in $\Omega$ \textbf{fails} to satisfy the monotonicity property \eqref{eq107}. 
\end{thm}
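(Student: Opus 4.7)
The strategy is to construct an explicit dumbbell-shaped family $\Omega_\epsilon$ parametrized by $\epsilon>0$ and show, via matched asymptotic expansion, that the $\beta$-Robin torsion function $u_\epsilon$ violates the monotonicity \eqref{eq107} for all sufficiently small $\epsilon$. Fix $\beta>0$, choose any $R>0$ and $d>R$, and let $\Omega_\epsilon$ be a smooth bounded domain that coincides with
\[
B_R((-d,0))\cup B_R((d,0))\cup\bigl([-d,d]\times(-\epsilon,\epsilon)\bigr)
\]
outside $\epsilon$-neighborhoods of the four concave corners where the neck meets the lobes; these corners are $C^\infty$-mollified at scale $\epsilon$. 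Then $\Omega_\epsilon$ is symmetric about the $x_1$-axis, convex in the $x_2$-direction (each vertical slice is a single interval), and has boundary curvature of order $-1/\epsilon$ at the mollified corners, so for small $\epsilon$ the hypothesis $\beta\geq-\kappa$ of \autoref{thm12main} is grossly violated.

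I would analyze $u_\epsilon$ in three regions. In each lobe, $u_\epsilon$ is well approximated by the isolated disk Robin torsion function
\[
u_R(x)=\frac{R^2-|x-(\pm d,0)|^2}{4}+\frac{R}{2\beta}.
\]
In the bulk of the neck, $u_\epsilon$ is close to the thin-strip Robin profile $(\epsilon^2-x_2^2)/2+\epsilon/\beta=O(\epsilon)$. In an $\epsilon$-scale junction layer around the right junction near $(d-R,0)$, introduce the rescaled variables $X=(x_1-(d-R))/\epsilon$, $Y=x_2/\epsilon$ and set $V_\epsilon(X,Y)=u_\epsilon(d-R+\epsilon X,\epsilon Y)$. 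Then $V_\epsilon$ satisfies $-\Delta V_\epsilon=\epsilon^2$ with Robin coefficient $\epsilon\beta$, and as $\epsilon\to 0$ it converges (by standard elliptic compactness) to a Neumann-harmonic function $V$ on the unbounded smoothed domain $\mathcal{D}=\{X>0\}\cup\{X<0,\,|Y|<1\}$. Matching with the two outer expansions forces $V\to R/(2\beta)$ as $X\to+\infty$ or $|Y|\to\infty$, and $V\to 0$ as $X\to-\infty$.

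By the horizontal-axis symmetry $V(X,Y)=V(X,-Y)$ and the strong maximum principle, $V$ attains its infimum along $\{Y=0\}$ and satisfies $\partial_Y V>0$ for $X>0$ and $Y>0$: moving upward from the axis at a point facing the channel mouth takes one away from the cold channel and toward the uniform value $R/(2\beta)$. Hence there is a compact set $K\subset\mathcal{D}\cap\{Y>0\}$ on which $\partial_Y V\geq c>0$ uniformly, with $c$ depending only on $R$ and $\beta$. Undoing the rescaling, $\partial_{x_2}u_\epsilon(d-R+\epsilon X,\epsilon Y)=\epsilon^{-1}\partial_Y V_\epsilon(X,Y)$, so for any $(X_0,Y_0)\in K$ the corresponding point $(x_1^\ast,x_2^\ast)\in\Omega_\epsilon$ satisfies $x_2^\ast=\epsilon Y_0>0$ and $\partial_{x_2}u_\epsilon(x_1^\ast,x_2^\ast)\geq c/(2\epsilon)>0$ for all sufficiently small $\epsilon$, contradicting \eqref{eq107}.

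The main obstacle is the rigorous justification of this matched expansion: (i) the existence, uniqueness, and qualitative properties of the inner profile $V$ on the unbounded Neumann domain $\mathcal{D}$ with prescribed values at the two ends; and (ii) a uniform-in-$\epsilon$ $C^1$ convergence $V_\epsilon\to V$ on compact subsets of $\mathcal{D}$. An alternative avoiding the full asymptotic machinery would be to work directly in $\Omega_\epsilon$ and construct an explicit barrier for the harmonic difference $u_\epsilon-u_R$ within the right lobe that enforces $\partial_{x_2}(u_\epsilon-u_R)\geq c/\epsilon$ at the target point, from which monotonicity failure follows immediately.
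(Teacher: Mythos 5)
Your strategy is genuinely different from the paper's, which considers a fixed cross-shaped nonconvex polygon $\mathcal{P}_{a,b}$ with $1<a<b$ and analyzes the Grisvard expansion at a re-entrant vertex. There the failure of monotonicity is read off from the sign of the coefficient $c_1$ multiplying the $r^{2/3}\cos(2\theta/3)$ corner mode, and the crucial fact $c_1<0$ is obtained by a rigid symmetry/comparison argument (diagonal reflection for $a=b$ gives $c_1=0$, a maximum-principle comparison for $a<b$ gives $c_1<0$) together with continuity in $a$; a $C^0$-close smooth perturbation then inherits the sign reversal. No matched asymptotics are needed.

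Your dumbbell proposal, however, has a structural gap at exactly the point you flag. The junction-layer profile $V$ you want -- bounded, Neumann-harmonic on $\mathcal D=\{X>0\}\cup\{X<0,\,|Y|<1\}$, tending to $R/(2\beta)$ in the half-plane end and to $0$ in the channel end -- does not exist. In two dimensions any nonzero net flux $Q$ entering the channel forces $V(X,Y)\sim-\frac{Q}{\pi}\log|(X,Y)|$ in the half-plane, which is incompatible with $V\to R/(2\beta)$; and zero flux forces $V$ to be constant on the half-plane by Liouville (after Neumann reflection), hence $V\equiv R/(2\beta)$. Consistent with this, a fin-type balance in the strip gives a temperature drop on the length scale $\sqrt{\epsilon/\beta}$, which is $\epsilon^{-1/2}$ in your rescaled variable $X$, so the channel is \emph{not} cold on any compact subset of $\mathcal D$; the matching value at the channel end of the inner region is $R/(2\beta)$, not $0$. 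Therefore the leading-order inner profile is constant, $\partial_Y V\equiv 0$, and the inequality $\partial_{x_2}u_\epsilon=\epsilon^{-1}\partial_Y V_\epsilon\ge c/(2\epsilon)$ on which the whole argument rests is unsupported. Any sign reversal would have to come from subleading corrections (in particular from the sign of the smoothed-corner singularity coefficient, whose dependence on the dumbbell geometry you do not address, and which controls a gradient of size $\epsilon^{-1/3}$, not $\epsilon^{-1}$), so the proposal does not currently prove the theorem. The paper sidesteps all of this by working directly with a corner and pinning down the sign of its leading coefficient by symmetry.
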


The construction relies on a careful symmetry analysis and local asymptotic expansions for a nonconvex polygon (see \autoref{prop31}); a small perturbation of this polygon yields a smooth counterexample. Hence, symmetry and convexity in one direction alone do not ensure the monotonicity property \eqref{eq107}, in sharp contrast with the Dirichlet case \cite{GNN79, BN91}.


\textbf{Outline of the paper.} In \autoref{Sec2Mon}, we mainly prove \autoref{thm12main}, while the counterexample in \autoref{thm13CouterExam} is constructed in \autoref{Sec3Example}.


\section{The proof of the monotonicity property} \label{Sec2Mon}

In this section, we establish the monotonicity result for the $\beta$-Robin torsion function using the continuity method via domain deformation.
We begin by recalling Serrin's lemma.

\begin{lma}\label{lemS}
Let $\Omega$ be a domain in $\R^{n}$ with the origin ${O}$ lying on its boundary $\partial\Omega$. Suppose that, in a neighborhood of ${O}$, the boundary $\partial\Omega$ consists of two $C^{2}$ hypersurfaces $\{\rho = 0\}$ and $\{\sigma = 0\}$ which intersect transversally. Assume that $\rho < 0$ and $\sigma < 0$ in $\Omega$. Let ${w} \in C^{2}(\overline{\Omega})$ satisfy ${w} < 0$ in $\Omega$ and ${w}({O})=0$, and suppose that
\begin{equation*}
{L}{w}={a}_{ij}{w}_{{x}_{i}{x}_{j}} + {b}_{i}{w}_{{x}_{i}} + {c}{w} \geq 0,
\end{equation*}
where ${L}$ is a uniformly elliptic operator whose coefficients are uniformly bounded in absolute value.
Assume that
\begin{equation}\label{eq201a}
{a}_{ij}\rho_{{x}_{i}}\sigma_{{x}_{j}} \geq 0 \mbox{ at } {O}.
\end{equation}
If equality holds in \eqref{eq201a}, we further assume that ${a}_{ij}\in C^{2}$ in $\overline\Omega$ near ${O}$, and that
\begin{equation}\label{eq201b}
{D}({a}_{ij}\rho_{{x}_{i}}\sigma_{{x}_{j}})=0 \mbox{ at } {O}
\end{equation}
for any first-order derivative ${D}$ at ${O}$ tangent to the submanifold $\{\rho = 0\} \cap \{\sigma = 0\}$.
Then, for any direction ${s}$ at ${O}$ which enters $\Omega$ transversally to each hypersurface, we have
\begin{enumerate} [label = $(\arabic*)$, start = 1]
\item
$\partial {w}/\partial {s} < 0$ at ${O}$ in the case of strict inequality in \eqref{eq201a}.
\item
either $\partial{w}/\partial {s} < 0$ or $\partial^2 {w}/\partial s^2 < 0$ at $O$ in the case of equality in \eqref{eq201a}.
\end{enumerate}
\end{lma}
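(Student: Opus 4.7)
The strategy is a corner-adapted version of Hopf's lemma. I would work locally in a small ball $B_r(O)$ and, after multiplying $w$ by a factor $e^{\lambda(\rho+\sigma)}$ if necessary, arrange that the zeroth-order coefficient of $L$ is nonpositive so that the weak maximum principle applies directly. The key observation is that the sign condition on $a_{ij}\rho_{x_i}\sigma_{x_j}$ at $O$ controls the angle that $\partial\Omega_1=\{\rho=0\}$ and $\partial\Omega_2=\{\sigma=0\}$ make at $O$ as measured in the $a_{ij}(O)$-metric: strict positivity corresponds to a strictly obtuse opening of $\overline\Omega$ at $O$ in this metric.

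For part $(1)$, this obtuse-angle property allows one to inscribe inside $\Omega$ a small $a_{ij}(O)$-ellipsoid tangent at $O$ to one of the hypersurfaces $\partial\Omega_1$ (or $\partial\Omega_2$), on whose boundary $O$ becomes a smooth boundary point. Applying the classical Hopf lemma for $L$ on this inscribed region yields the strict positive outward normal derivative of $w$ at $O$ with respect to the ellipsoid; since $s$ enters $\Omega$ transversally to each hypersurface, and hence to the tangent plane of the inscribed ellipsoid at $O$, this translates into $\partial_s w(O)<0$. A parallel argument via the prototype barrier $v=\rho\sigma$ is also available: since $\rho(O)=\sigma(O)=0$, one computes
\[
L(\rho\sigma)\big|_O=2\,a_{ij}\rho_{x_i}\sigma_{x_j}>0,
\]
so $L(\rho\sigma)>0$ in a sufficiently small ball, and the weak maximum principle applied to $w+\varepsilon\,\rho\sigma$ forces $w\leq -\varepsilon\,\rho\sigma$ in $\Omega\cap B_r(O)$. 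This quadratic bound will be the starting point in the equality case.

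For part $(2)$, with $a_{ij}\rho_{x_i}\sigma_{x_j}=0$ at $O$, the opening angle in the $a_{ij}(O)$-metric is exactly a right angle, so one cannot inscribe an $a$-ellipsoid tangent to one wall at $O$, and the Hopf argument above breaks down. Here the tangential-vanishing hypothesis \eqref{eq201b} enters essentially: using the extra $C^2$ regularity of the $a_{ij}$, one Taylor-expands $a_{ij}\rho_{x_i}\sigma_{x_j}$ at $O$, and \eqref{eq201b} guarantees that the leading correction survives only in directions transverse to the corner submanifold $\{\rho=\sigma=0\}$. This allows one to build a refined barrier of the form $v=\rho\sigma\,(1+\gamma\xi)$, with $\xi$ an appropriate non-tangential coordinate, so that $Lv>0$ uniformly in a neighborhood. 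The weak maximum principle then gives
\[
w(O+ts)\leq -\varepsilon\,t^{2}\,\partial_{s}\rho(O)\,\partial_{s}\sigma(O)+o(t^{2}),\qquad t\to 0^{+},
\]
and since $\partial_{s}\rho(O)\,\partial_{s}\sigma(O)>0$ by transversality, this quadratic upper bound forces the stated dichotomy: either $\partial_{s}w(O)<0$, or $\partial_{s}w(O)=0$ and then $\partial_{s}^{2}w(O)<0$.

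The main obstacle is clearly part $(2)$: the prototype barrier $\rho\sigma$ no longer provides a strict sign of $Lv$ at $O$, so one must exploit both the $C^2$ regularity of the $a_{ij}$ and the precise form of \eqref{eq201b} to restore $Lv>0$ uniformly in a one-sided neighborhood of $O$. Additional care is needed in the bookkeeping of first- versus second-order contributions to ensure that, for every admissible entering direction $s$, the quadratic comparison is sharp enough to force the strict sign in whichever of $\partial_{s}w(O)$ or $\partial_{s}^{2}w(O)$ survives.
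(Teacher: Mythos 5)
The paper does not actually prove this lemma: it simply quotes it as \cite[Lemma S]{GNN79}, an extension of Serrin's corner lemma \cite{Ser71}, and uses it as a black box. So there is no in-paper argument to measure yours against; I can only assess the proposal on its own terms, and there are two concrete gaps.

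First, the inscribed-ellipsoid step for part $(1)$ cannot work as described. If $E$ is any ellipsoid whose boundary is tangent to $\{\rho=0\}$ at $O$ and which lies in $\{\rho<0\}$, then the tangent cone of $E$ at $O$ is the \emph{entire} half-space $\{\nabla\rho(O)\cdot x<0\}$; since the tangent cone of $\Omega$ at $O$ is the proper wedge $\{\nabla\rho(O)\cdot x<0\}\cap\{\nabla\sigma(O)\cdot x<0\}$ and $\nabla\rho(O),\nabla\sigma(O)$ are independent, no such $E$ can be contained in $\Omega$, however obtuse the opening and however small or eccentric $E$ is: points of $\partial E$ with $\nabla\rho(O)\cdot x$ tangentially small but $\nabla\sigma(O)\cdot x>0$ exit $\Omega$ arbitrarily close to $O$. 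So the claimed reduction to the classical Hopf lemma on a smooth subdomain fails. Your fallback barrier $v=\rho\sigma$ does survive the maximum-principle step, but since $v$ vanishes to second order at $O$ the comparison $w\leq -\varepsilon\rho\sigma$ only forces $\partial_s w(O)\leq 0$, together with $\partial_s^2 w(O)<0$ when $\partial_s w(O)=0$. That is exactly the dichotomy of part $(2)$, \emph{not} the strict first-order inequality $\partial_s w(O)<0$ claimed in part $(1)$. In short, part $(1)$ — the case where the lemma gives more — is not actually established.

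Second, the treatment of the equality case is not yet a proof. When $a_{ij}\rho_{x_i}\sigma_{x_j}=0$ at $O$, one has $L(\rho\sigma)(O)=0$, and \eqref{eq201b} only kills the \emph{tangential} part of the gradient of $a_{ij}\rho_{x_i}\sigma_{x_j}$; the transversal directional derivatives are unconstrained, so $L(\rho\sigma)$ has no sign in a punctured neighborhood of $O$ and the ``quadratic bound'' from the strict case is simply not available here. The proposed correction $v=\rho\sigma(1+\gamma\xi)$ is not worked out, and a multiplicative perturbation of this kind contributes to $Lv$ only at first order near $O$: at best it can cancel the linear part of $L(\rho\sigma)$, leaving $Lv=O(|x|^2)$ of indeterminate sign, so the required positivity of $Lv$ in a one-sided neighborhood is not obtained. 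To make this barrier route rigorous you would need to carry out the corner Taylor expansion carefully (this is the technical heart of \cite[Lemma S]{GNN79}), or else simply invoke that lemma as the paper does.
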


This result is obtained in Gidas-Ni-Nirenberg \cite[Lemma S]{GNN79}, which can be viewed as an extension of Hopf's boundary lemma at a corner, due to Serrin \cite{Ser71}.

We now introduce the notations used throughout this section:
\begin{itemize}
\item
${s}$ denotes the arc length parameter along $\partial\Omega$, measured in the clockwise direction.
\item
$\tau({x})$ denotes the unit tangent vector to $\partial\Omega$ at ${x} \in \partial\Omega$, oriented in the clockwise direction.
\item
$\nu({x})$ denotes the unit \textbf{outer} normal vector of $\partial\Omega$ at ${x} \in \partial\Omega$.
\item
$\kappa({x})$ stands for the standard curvature of $\partial\Omega$ at ${x} \in \partial\Omega$.
\end{itemize}
Consequently, the frame $(\tau({x}), \nu({x}))$ forms a local right-handed rectangular coordinate system. The curvature $\kappa$ is nonnegative whenever the domain is smoothly convex. By the Frenet-Serret formulas, we have
\begin{equation} 
\frac{\partial}{\partial {s}} \tau = - \kappa \nu \quad \text{and } \frac{\partial}{\partial {s}} \nu = \kappa \tau.
\end{equation}
Differentiating the Robin boundary condition $\partial_{\nu} {u} + \beta {u} = 0$ on the boundary with respect to the arc length parameter ${s}$ (in the clockwise direction), we obtain
\begin{gather}
\label{eq201FrenetB}
{D}^{2}{u}[\tau, \nu] + (\kappa + \beta) \nabla {u} \cdot \tau = 0,
\\ \label{eq201FrenetC}
{D}^{3}{u}[\tau, \tau, \nu] - \kappa {D}^{2}{u}[\nu, \nu] + (2\kappa + \beta) {D}^{2}{u}[\tau, \tau] - \kappa (\kappa + \beta) \nabla {u} \cdot \nu + \partial_{s}\kappa \nabla {u} \cdot \tau = 0.
\end{gather}
These two identities will be utilized in subsequent arguments.

To prove \autoref{thm12main}, the key point is, indeed, to establish the nonvanishing of the vertical derivative of ${u}$ on the boundary (excluding the vertices).

\begin{prop}\label{prop22}
Let $\Omega \subset \R^{2}$ be a smooth domain satisfying the following assumptions:
\begin{enumerate}[label = \rm({A}\arabic*), start = 1]
\item \label{eq01itema}
$\Omega$ is a connected planar domain which is convex in the ${x}_{2}$ direction and symmetric with respect to the plane $\{ {x}_{2} = 0 \}$;
\item \label{eq01itemb}
the curvature $\kappa$ of $\partial\Omega$ satisfies $\beta \geq - \min_{\partial\Omega} \kappa$.
\end{enumerate}
Suppose that the $\beta$-Robin torsion function ${u}$ in $\Omega$ satisfies
\begin{equation}
\partial_{{x}_{2}} {u} < 0 \text{ in } \Omega \cap \{ {x}_{2} > 0 \},
\end{equation}
then ${u}$ also satisfies
\begin{gather} \label{eq203}
\partial_{{x}_{2}} {u} < 0 \text{ on } \partial\Omega \cap \{ {x}_{2} > 0 \},
\\ \label{eq204}
\partial_{{x}_{2}{x}_{2}} {u} < 0 \text{ on } \overline{\Omega} \cap \{ {x}_{2} = 0 \}.
\end{gather}
\end{prop}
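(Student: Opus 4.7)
The plan is to study the auxiliary function $w := u_{x_2}$. Since $\Delta u = -1$, $w$ is harmonic in $\Omega$; by the $x_2$-symmetry it is odd in $x_2$, so $w \equiv 0$ on $\Omega \cap \{x_2 = 0\}$; and by hypothesis $w < 0$ on $\Omega^+ := \Omega \cap \{x_2 > 0\}$. Both \eqref{eq203} and \eqref{eq204} will be read off $w$ by Hopf's boundary-point lemma at the smooth portions of $\partial\Omega^+$ and by Serrin's \autoref{lemS} at the two corners $A = (a_1,0)$, $B = (b_1,0)$ where the axis meets $\partial\Omega$.

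For \eqref{eq203}, I would argue by contradiction: suppose $w(P) = 0$ at some $P \in \partial\Omega \cap \{x_2 > 0\}$. Then $w \leq 0$ attains its boundary maximum at $P$, so Hopf's lemma (applicable since $\partial\Omega$ is smooth at $P$) gives $\partial_\nu w(P) > 0$, while interiority of $P$ along the 1D curve $\partial\Omega \cap \{x_2 > 0\}$ gives $\partial_\tau w(P) = 0$. Decomposing $e_2 = (\tau \cdot e_2)\tau + (\nu \cdot e_2)\nu$ at $P$ and using the Robin condition $u_\nu = -\beta u$, the differentiated identity \eqref{eq201FrenetB}, and $u_{\nu\nu} = -1 - u_{\tau\tau}$ from the PDE in the orthonormal frame $(\tau,\nu)$, I can eliminate $u_\tau$ and $u_{\tau\tau}$ from these two conditions to arrive at
\[
\frac{(\kappa(P)+\beta)\,\beta\, u(P)}{(\tau(P) \cdot e_2)^2} < -1 ,
\]
which is absurd under the assumptions $\kappa + \beta \geq 0$, $\beta > 0$, and the standard positivity $u > 0$ on $\overline{\Omega}$ (which follows from the maximum principle plus Hopf for the Robin torsion problem with $\beta > 0$). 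The degenerate subcase $\tau(P) \cdot e_2 = 0$, corresponding to the topmost point, is handled separately: $w(P) = 0$ combined with Robin forces $u(P) = 0$, contradicting positivity.

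For \eqref{eq204} at an interior axis point $Q \in \Omega \cap \{x_2 = 0\}$, Hopf's lemma applied to $w$ at $Q$ (with outward normal $-e_2$ to $\Omega^+$) gives $u_{x_2 x_2}(Q) < 0$ directly. The main obstacle, and the real work of this proposition, is the strict inequality at the two endpoints $A, B$, which are $\pi/2$-corners of $\Omega^+$ (since $\partial\Omega$ is tangent to $e_2$ at them by smoothness plus symmetry). I would apply \autoref{lemS} to $w$ at $A$ in its equality case: \eqref{eq201a} is an equality because the two boundary surfaces are perpendicular, and \eqref{eq201b} is vacuous since the intersection submanifold is zero-dimensional in $\R^2$. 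The $x_2$-symmetry of $u$ forces $w_{x_1}(A) = 0$ and $w_{x_1 x_1}(A) = w_{x_2 x_2}(A) = 0$, so for a transversal $s = (\cos\theta,\sin\theta)$ with $\theta \in (0,\pi/2)$ the Serrin dichotomy reduces to
\[
u_{x_2 x_2}(A)\sin\theta < 0 \quad\text{or}\quad u_{x_1 x_2 x_2}(A)\sin(2\theta) < 0 .
\]
Continuity together with the interior result already established yields $u_{x_2 x_2}(A) \leq 0$; if equality held, Serrin would force $u_{x_1 x_2 x_2}(A) < 0$, whereas identity \eqref{eq201FrenetC} evaluated at $A$ (with $u_{x_2}(A) = 0$, $u_{x_1}(A) = \beta u(A)$, and $u_{x_1 x_1}(A) = -1$ from the PDE) reduces to
\[
u_{x_1 x_2 x_2}(A) = \kappa(A)\bigl[\,1 + \beta(\kappa(A)+\beta)\, u(A)\,\bigr].
\]
The decisive final ingredient is that vertical convexity plus smoothness forces $\kappa(A), \kappa(B) \geq 0$: writing $\partial\Omega$ locally at $A$ as $x_1 = a_1 + h(x_2)$ with $h$ smooth and even in $x_2$, vertical convexity requires $h$ to attain its minimum at $0$, so $h''(0) = \kappa(A) \geq 0$. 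Hence the right-hand side above is nonnegative, contradicting strict negativity, and therefore $u_{x_2 x_2}(A) < 0$; the argument at $B$ is identical.
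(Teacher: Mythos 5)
Your proposal is correct and follows essentially the same route as the paper: Part~1 works in the tangent--normal frame at $P$ with the differentiated Robin identity \eqref{eq201FrenetB} plus Hopf to reach the same algebraic contradiction (your $\frac{(\kappa+\beta)\beta u}{(\tau\cdot e_2)^2}<-1$ is exactly the paper's chain of inequalities compressed into one line, and the degenerate case $\tau\cdot e_2=0$ plays the role of the paper's remark that $\nu(P)$ cannot be vertical); Part~2 uses Hopf on the interior of the axis and Serrin's corner lemma (\autoref{lemS}) combined with \eqref{eq201FrenetC} at the vertices, exactly as the paper does (you consider the left vertex, the paper the right, so the signs of the third-order quantities flip consistently).
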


\begin{figure}[htp]\centering
\begin{tikzpicture}[scale=1]
\pgfmathsetmacro\lha{4}; \pgfmathsetmacro\lhb{2};
\pgfmathsetmacro\lv{2};
\draw[] plot [domain= - 90: 90] ({\lha*cos(\x)}, {\lv*sin(\x)});
\draw[] plot [domain=90: 270] ({\lhb*cos(\x)}, {\lv*sin(\x)});
\draw[ ->, dotted] ({ - \lhb*1.2}, 0) -- ({\lha*1.3}, 0) node [above] {\footnotesize ${x}_{1}$};
\draw[ ->, dotted] ({(\lha - \lhb)/2}, { - \lv*0.9}) -- ({(\lha - \lhb)/2}, {\lv*1.25}) node [right] {\footnotesize ${x}_{2}$};
\fill ( - \lhb, 0) circle (1pt) node [below left] {\footnotesize ${z}_{L}$};
\fill (\lha, 0) circle (1pt) node [below right] {\footnotesize ${z}_{R}$};
\pgfmathsetmacro\JiaoP{50};
\pgfmathsetmacro\xP{\lha*cos(\JiaoP)};
\pgfmathsetmacro\yP{\lv*sin(\JiaoP)};
\pgfmathsetmacro\SloP{(\yP/\lv^2)/(\xP/\lha^2)};
\pgfmathsetmacro\AngP{atan(\SloP)};
\draw[ -> ](\xP, \yP) -- ++ ({\AngP - 90}: {\lv*0.45}) node[right] {$y_{1}$};
\draw[ -> ](\xP, \yP) -- ++ ({\AngP}: {\lv*0.45}) node[right] {$y_{2}$};
\fill (\xP, \yP) circle (1pt) node [below left] {\scriptsize ${P}$};
\end{tikzpicture}
\caption{The domain $\Omega=\{{x}: |{x}_{2}| < \phi({x}_{1})\}$}
\end{figure}
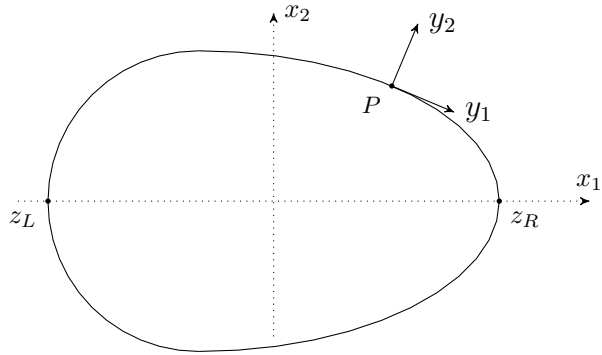

\begin{proof}
Since the solution ${u}$ is unique, it must be symmetric with respect to the line $\{ {x}_{2} = 0 \}$. The remainder of the proof is divided into two parts.

\textbf{Part 1}.
We establish the strict monotonicity on the boundary (excluding the vertices).
To show ${u}_{{x}_{2}} < 0$ on $\Gamma^{ + }$, where $\Gamma^{ + } = \partial\Omega \cap \{{x}_{2} > 0\}$, we argue by contradiction. Suppose that
\begin{equation}\label{eq205}
{u}_{{x}_{2}} = 0 \quad \text{at } {P}
\end{equation}
for some point ${P} = ({\bar{x}}_{1}, {\bar{x}}_{2}) \in \Gamma^{ + }$. By the Robin boundary condition $\partial_{\nu} {u} = - \beta {u} > 0$ on $\partial\Omega$, we know that $\nu({P})$ cannot be vertical. Consider a Cartesian coordinate system $({y}_{1}, {y}_{2})$ with origin at ${P}$ so that the ${y}_{1}$-axis is the tangential direction (clockwise) and the positive ${y}_{2}$-axis is the outer normal direction $\nu({P})$ at ${P}$. By assumption \ref{eq01itema} on the domain, there exists a unit vector $({\alpha}_{1}, {\alpha}_{2})$ with ${\alpha}_{2} \geq 0$, ${\alpha}_{1} \neq 0$ such that
\begin{equation}
\partial_{{x}_{2}} = {\alpha}_{1}\partial_{{y}_{1}} + {\alpha}_{2}\partial_{{y}_{2}}.
\end{equation}
Differentiating the Robin boundary condition along the boundary (see \eqref{eq201FrenetB}), one obtains
\begin{equation}\label{eq208a}
\partial_{{y}_{2}} {u} + \beta {u} = 0, \qquad
\partial_{{y}_{1}{y}_{2}} {u} + (\kappa_{*} + \beta) \partial_{{y}_{1}} {u} = 0 \quad \text{at } {x} = {P},
\end{equation}
where $\kappa_{*} = \kappa({P})$ is the curvature of $\partial\Omega$ at ${P}$. Observe that $\partial_{{x}_{2}} {u}$ attains its local maximum zero at ${P}$. Differentiating $\partial_{{x}_{2}} {u} = {\alpha}_{1} \partial_{{y}_{1}} {u} + {\alpha}_{2} \partial_{{y}_{2}} {u}$ along the boundary $\partial\Omega$, we get
\begin{equation}\label{eq208b}
{\alpha}_{1} \partial_{{y}_{1}{y}_{1}} {u} + {\alpha}_{2} \partial_{{y}_{2}{y}_{1}} {u} = 0 \quad \text{at } {x} = {P}.
\end{equation}
Applying the Hopf lemma to the harmonic function $\partial_{{x}_{2}} {u}$ at ${P}$, we obtain
\begin{equation}\label{eq208c}
{\alpha}_{1} \partial_{{y}_{1}{y}_{2}} {u} + {\alpha}_{2} \partial_{{y}_{2}{y}_{2}} {u} > 0 \quad \text{at } {x} = {P}.
\end{equation}

First, consider the case ${\alpha}_{2} = 0$. Then, ${u}_{{x}_{2}}({P}) = 0$ implies ${u}_{{y}_{1}}({P}) = 0$. Combining this with \eqref{eq208a} gives $\partial_{{y}_{1}{y}_{2}} {u}({P}) = 0$. But then, since ${\alpha}_{2} = 0$, \eqref{eq208c} reduces to
\[
{\alpha}_{1} \partial_{{y}_{1}{y}_{2}} {u}({P}) > 0,
\]
which is a contradiction.

Next, consider the case ${\alpha}_{2} > 0$. By \eqref{eq208a}, \eqref{eq208b}, and \eqref{eq208c},
\begin{equation}
{\alpha}_{1} \partial_{{y}_{1}{y}_{2}} {u}
= - (\beta + \kappa_{*}) {\alpha}_{1} \partial_{{y}_{1}} {u}
= (\beta + \kappa_{*}) {\alpha}_{2} \partial_{{y}_{2}} {u}
= - \beta (\beta + \kappa_{*}) {\alpha}_{2} {u} \quad \text{at } {x} = {P},
\end{equation}
and
\begin{equation} \label{eq210}
{\alpha}_{1}^{2} \partial_{{y}_{1}{y}_{1}} {u}
= - {\alpha}_{1}{\alpha}_{2} \partial_{{y}_{1}{y}_{2}} {u}
= \beta (\beta + \kappa_{*}) {\alpha}_{2}^{2} {u} \geq 0 \quad \text{at } {x} = {P},
\end{equation}
where assumption \ref{eq01itemb} is used. On the other hand, multiplying \eqref{eq208b} by ${\alpha}_{1}$ and subtracting it from ${\alpha}_{2}$ times \eqref{eq208c}, we obtain ${\alpha}_{1}^{2} \partial_{{y}_{1}{y}_{1}} {u}({P}) - {\alpha}_{2}^{2} \partial_{{y}_{2}{y}_{2}} {u}({P}) < 0$. Since ${\alpha}_{1}^{2} + {\alpha}_{2}^{2} = 1$,
\begin{equation}
\partial_{{y}_{1}{y}_{1}} {u} < {\alpha}_{2}^{2} \Delta {u} < 0 \quad \text{at } {y} = 0,
\end{equation}
where the equation for ${u}$ is used. This contradicts \eqref{eq210}.

In both cases, we reach a contradiction. Hence, \eqref{eq203} is proved.

\textbf{Part 2}.
We now prove \eqref{eq204}.
By the symmetry of ${u}$,
\begin{equation} \label{eq211a}
\partial_{{x}_{2}} {u} = 0 \quad \text{on } \overline{\Omega} \cap \{ {x}_{2} = 0 \}.
\end{equation}
Applying the Hopf lemma to the harmonic function $\partial_{{x}_{2}} {u}$, we obtain $\partial_{{x}_{2}{x}_{2}} {u} < 0$ on $\Omega \cap \{ {x}_{2} = 0 \}$. Consequently, $\partial_{{x}_{2}{x}_{2}} {u} \leq 0$ on $\partial\Omega \cap \{ {x}_{2} = 0 \}$.

To show the negativity of the second tangential derivative of ${u}$ at both the left and right vertices, we argue by contradiction. Without loss of generality, suppose that
\begin{equation} \label{eq211b}
\partial_{{x}_{2}{x}_{2}} {u}({z}_{R}) = 0
\end{equation}
where ${z}_{L}$ and ${z}_{R}$ are the left and right vertices of $\partial\Omega$. Differentiating the Robin boundary condition along the boundary twice (see \eqref{eq201FrenetC}), and using \eqref{eq211a} and \eqref{eq211b}, we get
\begin{equation*}
\partial_{{x}_{1}{x}_{2}{x}_{2}} {u} - \kappa \partial_{{x}_{1}{x}_{1}} {u} - (\kappa + \beta) \kappa \partial_{{x}_{1}} {u} = 0 \quad \text{at } {z}_{R},
\end{equation*}
that is,
\begin{equation}
\partial_{{x}_{1}{x}_{2}{x}_{2}} {u} = \kappa \Delta {u} - (\kappa + \beta) \kappa \beta {u} \quad \text{at } {z}_{R}.
\end{equation}
The equation for ${u}$ implies that $\Delta {u} < 0$. The convexity of the domain in the ${x}_{2}$-direction gives $\kappa({z}_{R}) \geq 0$. Therefore,
\begin{equation} \label{eq213}
\partial_{{x}_{1}{x}_{2}{x}_{2}} {u} ({z}_{R}) \leq 0.
\end{equation}

On the other hand, note that
\begin{align*}
{u}_{{x}_{2}} < 0 \quad \text{in } \Omega^{ + } = \Omega \cap \{ {x}_{2} > 0 \},
\end{align*}
\begin{align} \label{eq216}
{u}_{{x}_{2}} = |\nabla {u}_{{x}_{2}}| = (\partial_{{x}_{1}})^{2} {u}_{{x}_{2}} = (\partial_{{x}_{2}})^{2} {u}_{{x}_{2}} = 0 \quad \text{at } {z}_{R},
\end{align}
and the boundary $\partial\Omega^{ + }$ of the half-domain $\Omega^{ + }$ meets at a right angle at ${z}_{R}$. The identities in \eqref{eq216} follow from the even symmetry of ${u}$, the equation for ${u}_{{x}_{2}}$ and the assumption \eqref{eq211b}. By applying \autoref{lemS} to the linearized equation for ${u}_{{x}_{2}}$, we obtain that either
\begin{equation*}
\partial_{\mathbf{e}} {u}_{{x}_{2}}({z}_{R}) < 0 \quad \text{or} \quad (\partial_{\mathbf{e}})^{2} {u}_{{x}_{2}}({z}_{R}) < 0,
\end{equation*}
where $\mathbf{e} = ( - 1, 1)/\sqrt{2}$ is an inward direction relative to $\partial\Omega^{ + }$ at ${z}_{R}$. The first case violates \eqref{eq216}, and hence
\begin{equation*}
- \partial_{{x}_{1}{x}_{2}} {u}_{{x}_{2}}({z}_{R}) < 0.
\end{equation*}
This contradicts \eqref{eq213}, and hence the assumption \eqref{eq211b} is false. Therefore, \eqref{eq204} is proved.
\end{proof}

Next, we show that any two domains satisfying \ref{eq01itema} and \ref{eq01itemb} can be smoothly deformed from one into another, and during the smooth deformation, \ref{eq01itema} and \ref{eq01itemb} are preserved.

\begin{lma}\label{lma23}
Fix any constant $\beta > 0$, and let ${\mathcal{F}}$ denote the collection of all $C^{2}$ domains $\Omega$ satisfying \ref{eq01itema} and \ref{eq01itemb}. Then any two domains in ${\mathcal{F}}$ can be joined by a continuous family of domains in ${\mathcal{F}}$.
\end{lma}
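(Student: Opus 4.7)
The plan is to prove that $\mathcal{F}$ is path-connected by combining two simple operations: homothetic rescaling, which preserves $\mathcal{F}$ in the enlarging direction, and linear interpolation of the vertical width functions after their horizontal supports have been matched. The underlying observation is that once a domain is rescaled large enough, its boundary curvature is so small in absolute value that the constraint \ref{eq01itemb} has a lot of slack, and convex combinations of two such width functions trivially stay inside $\mathcal{F}$.

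Concretely, I would first parametrize each $\Omega \in \mathcal{F}$ as $\Omega_\phi = \{(x_1, x_2) : |x_2| < \phi(x_1),\, x_1 \in (a,b)\}$ with $\phi \in C^2([a,b])$, $\phi(a)=\phi(b)=0$, $\phi>0$ on $(a,b)$; assumption \ref{eq01itemb} on the upper graph then reads $\phi''(x_1) \le \beta (1+\phi'(x_1)^2)^{3/2}$. Given $\Omega_0,\Omega_1\in\mathcal{F}$, the path is assembled in three segments. First, scale $\Omega_i$ homothetically to $\lambda_i\Omega_i$ with $\lambda_i\ge 1$ chosen so large that (i) their horizontal supports coincide, say $[-L,L]$, and (ii) the scaled curvature satisfies $|\kappa_{\lambda_i\Omega_i}|<\beta/2$ throughout. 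For each $i$, $\{\mu\Omega_i\}_{\mu\in[1,\lambda_i]}$ is a continuous path in $\mathcal{F}$: symmetry and $y$-convexity are trivially preserved under scaling, and the curvature transforms as $\kappa\mapsto \kappa/\mu$, so $\kappa/\mu \ge -\beta/\mu \ge -\beta$. Second, with both supports equal to $[-L,L]$ and both curvatures small, linearly interpolate the widths $\phi_t = (1-t)\phi_{\lambda_0\Omega_0} + t\phi_{\lambda_1\Omega_1}$ for $t\in[0,1]$; symmetry and $y$-convexity are preserved by construction, while the required inequality holds because $\phi_t''\le \beta/2$ and $\beta(1+\phi_t'^2)^{3/2}\ge \beta$. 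Concatenating these with the scaling segments produces a continuous family from $\Omega_0$ to $\Omega_1$ in $\mathcal{F}$.

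The principal obstacle is the boundary behaviour at the two vertical-tangent points $(a,0)$ and $(b,0)$, where $\phi$ vanishes like $\sqrt{(x_1-a)(b-x_1)}$ and $\phi'$ blows up, so naive linear interpolation of $\phi$ may destroy $C^2$ smoothness at the endpoints (and the pointwise bound on $\phi''$ used above fails there). The remedy I would adopt is to factor out this singular behaviour: write $\phi(x_1) = \sqrt{(x_1-a)(b-x_1)}\, h(x_1)$ with $h\in C^2([a,b])$ strictly positive, which precisely encodes the smooth closure with vertical tangents, and then linearly interpolate the regular function $h$ rather than $\phi$. One must translate the curvature inequality into a differential inequality for $(h,h',h'')$ and check that this inequality survives convex combinations once the rescaling factors $\lambda_i$ are taken large enough; this reduction to a genuinely $C^2$ function is the only delicate point. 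Modulo this endpoint bookkeeping, the rescale-then-interpolate strategy should yield the required continuous family in $\mathcal{F}$.
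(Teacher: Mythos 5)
Your three-segment plan --- scale both domains up to flatten the curvature, interpolate, scale back down --- is exactly the paper's strategy, and the key quantitative observation ($\kappa \mapsto \kappa/\mu$ under dilation by $\mu$, so large dilation creates slack in \ref{eq01itemb}) is identical. The difference lies in the middle segment. The paper interpolates by \emph{Minkowski} convex combination: after rescaling so both domains project onto the same horizontal interval, it sets $\tilde\Omega_t = \{(1-t)x + ty : x \in \delta_0^{-1}\Omega_0,\ y \in \delta_1^{-1}\Omega_1\}$, notes a uniform curvature lower bound $-C$ over this family, and then frames the Minkowski segment by scalings of size $1/\epsilon$ with $\epsilon < \min\{1,\beta/C\}$. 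You instead interpolate the width functions pointwise, $\phi_t = (1-t)\phi_0 + t\phi_1$.

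The vertex obstacle you flag is real, but the remedy you propose does not resolve it under the lemma's actual hypothesis. For a domain that is only $C^2$, the quotient $h(x_1) = \phi(x_1)/\sqrt{(x_1-a)(b-x_1)}$ is in general merely \emph{continuous} at the endpoints, not $C^2$: if the boundary near the right vertex is $x_1 = \Psi(x_2)$ with $\Psi$ even and $C^2$, the Whitney factorization $\Psi(y) = A(y^2)$ gives only $A \in C^1$, so $\phi(x) = \sqrt{A^{-1}(x)}$ satisfies $\phi(x)/\sqrt{b-x} = \sqrt{A^{-1}(x)/(b-x)}$ with a quotient that need not be differentiable at $x=b$. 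Your factorization ``precisely encodes'' the closure only when the boundary is substantially smoother than $C^2$. Hence ``interpolate the regular factor $h$'' is not available, and the deferred ``endpoint bookkeeping'' is the crux of your route, not a footnote. One can in fact verify, by a careful expansion, that the pointwise width interpolation does preserve $C^2$ closure at the vertices (several apparent $O(1/(L-x))$ singularities in the second derivative of the relevant quantities cancel), but your proposal supplies neither that computation nor a workable substitute. The Minkowski sum sidesteps this entirely: near each vertex both boundary arcs are strictly convex, and the Minkowski combination of two strictly convex $C^2$ arcs, matched by outward normal, is again $C^2$ with radii of curvature interpolating linearly. That automatic vertex regularity is precisely what your pointwise interpolation loses, so I would count this as a genuine gap rather than a deferred detail.
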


\begin{proof}
The proof is based on the observation that sufficiently enlarging the initial domain and the target domain, and linearly connecting them, always preserves the membership in $\mathcal{F}$. To be more precise, let $\Omega_{0}$ and $\Omega_{1}$ be two elements of $\mathcal{F}$. After a suitable translation, there exist two positive functions ${\Phi}_{i}: ( - {l}_{i}, {l}_{i}) \to \R$ such that
\begin{equation*}
\Omega_{i} = \{ {x} \in \R^{2}: \, |{x}_{2}| < {\Phi}_{i}({x}_{1}), \, |{x}_{1}| < {l}_{i} \} \quad \text{for } {i} = 0, 1.
\end{equation*}
Define ${l}_{\star} = \max\{{l}_{0}, {l}_{1}\}$ and $\delta_{i} = {l}_{i}/{l}_{\star}$, ${i} = 0, 1$.
Set
\begin{equation}
\tilde{\Omega}_{{t}} = \{ (1 - {t}) {x} + {t} {y}: \, \delta_{0} {x} \in \Omega_{0}, \, \delta_{1} {y} \in \Omega_{1} \}, \quad {t} \in [0, 1].
\end{equation}
Then the projection of $\tilde{\Omega}_{{t}}$ onto the ${x}_{1}$-axis is the interval $( - {l}_{\star}, {l}_{\star})$, and $\tilde{\Omega}_{{t}}$, ${t} \in [0, 1]$, is a continuous family of $C^{2}$ domains. Furthermore, there exists a positive constant ${C}$ such that for every ${t} \in [0, 1]$, the curvature of $\tilde{\Omega}_{{t}}$ is bounded below by $ - {C}$. Now, fix any $\epsilon$ satisfying $0 < \epsilon < \min\{1, \beta/{C}\}$.

Define
\begin{equation}\begin{aligned}
\Omega_{{t}} &= \left\{ (1 - 3{t} + \frac{3{t}}{\epsilon\delta_{0}}) {x}: \, {x} \in \Omega_{0} \right\} \quad \text{for } {t} \in [0, \tfrac{1}{3}], \\
\Omega_{{t}} &= \left\{ \frac{2 - 3{t}}{\epsilon\delta_{0}} {x} + \frac{3{t} - 1}{\epsilon\delta_{1}} {y}: \, {x} \in \Omega_{0}, \, {y} \in \Omega_{1} \right\} \quad \text{for } {t} \in [\tfrac{1}{3}, \tfrac{2}{3}], \\
\Omega_{{t}} &= \left\{ (3{t} - 2 + \frac{3 - 3{t}}{\epsilon\delta_{1}}) {y}: \, {y} \in \Omega_{1} \right\} \quad \text{for } {t} \in [\tfrac{2}{3}, 1].
\end{aligned}\end{equation}
Then $\Omega_{{t}}$, ${t} \in [0, 1]$, is a continuous family of domains satisfying \ref{eq01itema} and \ref{eq01itemb}. This completes the proof.
\end{proof}

Now we turn to complete the proof of the main result.

\begin{proof}[Proof of \autoref{thm12main}]
Let $\Omega_{1} = \Omega$ be the target domain in \autoref{thm12main} and let us choose the unit ball $\Omega_{0} = \{{x} \in \R^{2}: |{x}| < 1\}$ as the initial domain. By \autoref{lma23}, one can find a continuously varying family of domains $\{\Omega_{{t}}\}_{{t} \in [0, 1]}$ connecting $\Omega_{0}$ to $\Omega_{1}$, all satisfying \ref{eq01itema} and \ref{eq01itemb}.

Let ${{u}}^{t}$ be the $\beta$-Robin torsion function in $\Omega_{{t}}$. Since $\Omega_{0}$ is the unit ball, we have
\begin{equation*}
{{u}}^{0} = \frac{1}{2\beta} - \frac{1}{4}|{x}|^{2} \quad \text{for } |{x}| \leq 1.
\end{equation*}
It follows immediately that for ${t} = 0$ there holds
\begin{equation}\label{eq228}
\begin{gathered}
{{u}}^{t}({x}_{1}, {x}_{2}) = {{u}}^{t}({x}_{1}, - {x}_{2}) \quad \text{for } {x} \in \overline{\Omega_{{t}}}, \\
\frac{\partial {{u}}^{t}}{\partial {x}_{2}} < 0 \quad \text{in } \overline{\Omega_{{t}}} \cap \{{x}_{2} > 0\}, \qquad
\frac{\partial^{2} {{u}}^{t}}{\partial {x}_{2}^{2}} < 0 \quad \text{on } \overline{\Omega_{{t}}} \cap \{{x}_{2} = 0\}.
\end{gathered}
\end{equation}
Recall that the map ${t} \mapsto {{u}}^{t}$ is continuous from $[0, 1]$ into $C^{2}$ function spaces. Thus, there exist small $\delta > 0$ and $\varepsilon > 0$ such that
\begin{equation*}
\frac{\partial {{u}}^{t}}{\partial {x}_{2}} < 0 \quad \text{in } \overline{\Omega_{{t}}} \cap \{{x}_{2} \geq \delta\}, \qquad
\frac{\partial^{2} {{u}}^{t}}{\partial {x}_{2}^{2}} < 0 \quad \text{on } \overline{\Omega_{{t}}} \cap \{0 \leq {x}_{2} \leq \delta\}
\end{equation*}
hold for ${t} \in [0, \varepsilon]$. Combining this with the fact that $\partial_{{x}_{2}} {u}^{{t}} = 0$ for ${x}_{2} = 0$, we deduce that \eqref{eq228} holds for ${t} \in [0, \varepsilon)$. Therefore, \eqref{eq228} holds for ${t}$ in some maximal interval $[0, \bar{t})$ for some $\bar{t} \in (0, 1]$. By continuity, we have
\begin{equation*}
{x}_{2} \frac{\partial {{u}}^{\bar{t}}}{\partial {x}_{2}} \leq 0.
\end{equation*}
Since $\partial_{{x}_{2}} {{u}}^{\bar{t}} \not\equiv 0$, by applying the strong maximum principle to the (linear) equation for $\partial_{{x}_{2}} {{u}}^{\bar{t}}$, we obtain
\begin{equation*}
\frac{\partial {{u}}^{\bar{t}}}{\partial {x}_{2}} < 0 \quad \text{in } \Omega_{\bar{t}} \cap \{{x}_{2} > 0\}.
\end{equation*}
It then follows from \autoref{prop22} that \eqref{eq228} holds for ${t} = \bar{t}$. Again by continuity, \eqref{eq228} holds for all ${t}$ close to $\bar{t}$, so necessarily $\bar{t} = 1$, and the maximal interval is $[0, 1]$. In particular, \eqref{eq228} is valid for ${t} = 1$. This completes the proof.
\end{proof}

\begin{rmk} \label{rmk24}
Let $\beta > 0$ and let $\Omega$ be a smooth domain satisfying \ref{eq01itema} and \ref{eq01itemb}. Suppose that ${u}$ is a positive solution of the following semilinear equation:
\begin{equation} \label{eq231}
\begin{cases}
\Delta {u} + {f}({u}) = 0 & \text{ in } \Omega, \\
\partial_{\nu} {u} + \beta {u} = 0 & \text{ on } \partial\Omega,
\end{cases}
\end{equation}
where ${f}$ is a smooth function such that ${f}(\theta) = 0$, ${f} < 0$ on $(\theta, \infty)$, and the map ${t} \in (0, \theta) \mapsto {f}({t})/{t}$ is strictly decreasing for some $\theta > 0$.

Then ${u}$ must be symmetric and satisfies \eqref{eq107}.
\end{rmk}

\begin{proof}
The proofs of \autoref{thm12main} and \autoref{prop22} use only that ${u} \geq 0$ and $\Delta {u} \leq 0$ on $\partial\Omega$, together with the continuous dependence of solutions under domain perturbations. Under our assumption on the nonlinearity ${f}$, the positive solution ${u}$ to \eqref{eq231} is unique\footnote{We are not aware of a convenient reference. Here we give a simple proof: it follows from integrating by parts of $\int (\frac{\Delta u_1}{u_1} - \frac{\Delta u_2}{u_2}) (u_1^2 - u_2^2)\,dx$, and using the so-called Picone's identity $|\nabla u_2|^2 - \nabla u_1 \cdot \nabla \big(\frac{u_2^2}{u_1}\big) = \big|\nabla u_2 - \frac{u_2}{u_1}\nabla u_1 \big|^2$.}, uniformly bounded by $\theta$, and hence depends continuously on the domain. Consequently, the semilinear case follows by the same arguments, and we omit the details. 
\end{proof}


\section{Some counterexamples} \label{Sec3Example}

In this section, we demonstrate that the monotonicity property \eqref{eq107} in \autoref{thm12main} fails in general when only condition \ref{eq01itema} is imposed. 

A typical example in which \ref{eq01itemb} fails is when the boundary curvature is sufficiently negative.
Accordingly, we first consider a piecewise smooth domain $\Omega$ with a nonsmooth boundary point $p$ whose interior angle exceeds $\pi$; see, e.g., \autoref{fig02}.
Without additional assumptions, in a neighborhood of $p$ the Robin torsion function is, in general, not $C^{1}$ and not monotonically decreasing in either coordinate variable, as indicated by a local expansion. Motivated by this observation, we rigorously verify the following monotonicity and non-monotonicity properties on a half-domain for the specific nonconvex polygon $\mathcal{P}$ below.

\begin{prop} \label{prop31}
Let ${a}, {b} \in (1, \infty)$ be two constants, and define
\begin{equation} \label{eq302}
\begin{aligned}
\mathcal{R}_{{a}}^{h} & = \{ {x} \in \R^{2}: \, |{x}_{1}| < {a}, \, |{x}_{2}| < 1 \}, \\
\mathcal{R}_{{b}}^{v} & = \{ {x} \in \R^{2}: \, |{x}_{1}| < 1, \, |{x}_{2}| < {b} \}, \\
\mathcal{P} & = \mathcal{P}_{{a}, {b}} = \mathcal{R}_{{a}}^{h} \cup \mathcal{R}_{{b}}^{v}.
\end{aligned}
\end{equation}
See \autoref{fig02}.
Let $\beta \in (0, \infty)$ and let $u$ be the $\beta$-torsion function in $\mathcal{P}$, i.e.,
\begin{equation*}
\Delta {u} = - 1 \quad \text{in } \mathcal{P}, \qquad \partial_{\nu} {u} + \beta {u} = 0 \quad \text{on } \partial \mathcal{P}.
\end{equation*}
Then ${u}$ is symmetric with respect to both coordinate axes. More precisely, 
\begin{enumerate}[label = \rm(\arabic*)]
\item \label{eq3itemA}
If ${a} = {b}$, then 
\begin{equation} \label{eq303}
\partial_{{x}_{2}} {u} < 0 \text{ in } \mathcal{P} \cap \{ {x}_{2} > 0 \}.
\end{equation}
\item \label{eq3itemB}
If ${a} < {b}$, then \eqref{eq303} \textbf{does not} hold; in fact, $\partial_{{x}_{2}} {u} > 0$ at points in $\mathcal{P}$ sufficiently near the nonconvex vertex $(1, 1)$. 
\item 
One has $\partial_{{x}_{1}} {u} < 0$ in $\mathcal{P} \cap \{ {x}_{1} > 0 \}$ whenever ${a} \leq {b}$. 
\end{enumerate}
\end{prop}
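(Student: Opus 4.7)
My plan is to derive all three parts from (i) uniqueness-driven symmetries of $u$, and (ii) a local asymptotic expansion at the nonconvex vertex $(1,1)$. Uniqueness of the Robin torsion function, together with the invariance of $\mathcal{P}$ under each reflection $x_i\mapsto -x_i$, gives $u(\pm x_1,\pm x_2) = u(x_1,x_2)$; when $a=b$, the swap $(x_1,x_2)\mapsto(x_2,x_1)$ is an additional symmetry and yields $u(x_1,x_2)=u(x_2,x_1)$. In particular $\partial_{x_i}u\equiv 0$ on $\{x_i=0\}\cap\mathcal{P}$. In local polar coordinates $(r,\theta)$ centered at $(1,1)$ with $\theta \in (0,3\pi/2)$ covering the interior, standard Kondratiev--Grisvard theory for the Robin Laplacian on plane polygons produces
\begin{equation*}
u = u(1,1) + c\, r^{2/3}\cos(2\theta/3) + O(r^{4/3}),
\end{equation*}
the exponent $2/3$ arising from the transverse eigenvalue problem $f''+\mu^2 f = 0$, $f'(0)=f'(3\pi/2)=0$ (the leading-order Neumann reduction of the Robin BC at the corner, since $\beta u$ is subdominant to $\partial_\nu u$ when $\mu<1$). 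Differentiation gives
\begin{equation*}
\partial_{x_2}u = \tfrac{2c}{3}\,r^{-1/3}\cos(\theta/3) + O(r^{1/3}),\qquad \partial_{x_1}u = -\tfrac{2c}{3}\,r^{-1/3}\sin(\theta/3) + O(r^{1/3}),
\end{equation*}
and since $\cos(\theta/3)>0$ on $[0,3\pi/2)$ and $\sin(\theta/3)>0$ on $(0,3\pi/2]$, the sign of each derivative near $(1,1)$ is dictated by the single scalar $c = c(a,b)$.

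For part \ref{eq3itemA}, the swap sends $\theta \mapsto 3\pi/2-\theta$ and $\cos(2\theta/3)\mapsto -\cos(2\theta/3)$; swap symmetry of $u$ at $a=b$ therefore forces $c=0$, so $\partial_{x_2}u$ stays bounded at the corner. I would then apply the maximum principle to the harmonic function $W=\partial_{x_2}u$ on $\mathcal{P}\cap\{x_2>0\}$: $W=0$ on $\{x_2=0\}$; on horizontal pieces of $\partial\mathcal{P}\cap\{x_2>0\}$ the Robin BC reads $W=-\beta u<0$; on vertical pieces, tangential differentiation of $\partial_\nu u+\beta u=0$ shows $W$ itself satisfies $\partial_\nu W+\beta W = 0$, so a positive boundary maximum is ruled out by Hopf's lemma. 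Convex corners of the half-domain are treated by Serrin's lemma (\autoref{lemS}), and the nonconvex corners $(\pm 1,1)$ are harmless because $c=0$. The strong maximum principle then yields $W<0$ in the upper half-domain. Part (3) at $a=b$ follows by the swap, and for $a<b$ the identical scheme applied to $V=\partial_{x_1}u$ succeeds once the leading singular blowup $-\tfrac{2c}{3}r^{-1/3}\sin(\theta/3)$ is nonpositive, i.e., once $c\geq 0$; the latter will be a byproduct of part \ref{eq3itemB}.

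Claim \ref{eq3itemB} therefore reduces to proving the strict positivity $c>0$ when $a<b$. My plan is to apply Green's identity on $\mathcal{P}\setminus B_\rho(1,1)$ to $u$ and a dual singular function $\phi$ solving $\Delta\phi=0$ in $\mathcal{P}$, $\partial_\nu\phi+\beta\phi=0$ on $\partial\mathcal{P}$, and $\phi\sim r^{-2/3}\cos(2\theta/3)$ near $(1,1)$; taking $\rho\to 0$ and using $\int_0^{3\pi/2}\cos^2(2\theta/3)\,d\theta = 3\pi/4$ yields the stress-intensity identity
\begin{equation*}
c = \frac{1}{\pi}\int_\mathcal{P}\phi\, dx.
\end{equation*}
At $a=b$, $\phi$ is swap antisymmetric and the integral vanishes, consistent with $c=0$. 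To propagate $c>0$ into the regime $a<b$ I would compute the shape derivative $\partial_b c$ at $a=b$ via a Hadamard-type formula, exhibiting it as a strictly positive boundary integral over the moving top edges of $\mathcal{R}_b^v$; combined with continuity of $c(a,b)$ and the maximum-principle obstruction just described (which forbids $c<0$ when $a\leq b$), this extends strict positivity to every $b>a$. The expansion for $\partial_{x_2}u$ then delivers the asserted positivity near $(1,1)$.

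The main obstacle is this final sign determination: the dual function $\phi$ is known only through its leading singular part and the dual PDE, so controlling the sign of $\phi$ or of the relevant Hadamard integrals is delicate. A back-up strategy is a variational comparison between the torsion functionals on $\mathcal{P}_{a,b}$ and $\mathcal{P}_{a,a}$; the essential difficulty of quantifying how the asymmetry $a<b$ forces the singular coefficient $c$ at $(1,1)$ to be strictly positive remains the same.
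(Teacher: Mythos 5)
Your Parts (1) and (3) run parallel to the paper's: uniqueness gives the reflection symmetries, the Kondratiev--Grisvard expansion at the reentrant vertex $(1,1)$ has leading singular term $c\,r^{2/3}\cos(2\theta/3)$ (with the Robin term subdominant at the corner), swap symmetry at $a=b$ kills that coefficient, and the maximum/Hopf argument for $\partial_{x_1}u$ on $\mathcal{P}^{++}$ goes through once the singularity either vanishes ($a=b$) or contributes a negative blowup ($c\geq 0$). This is exactly the paper's Parts 1 and 5. The one bookkeeping item to make fully explicit is that when $c=0$ the expansion also gives the finite value $\partial_{x_2}u(1,1)=-\beta u(1,1)<0$ (from the polynomial part of the corner expansion, $c_0\bigl(1-\beta(y_1+y_2)+\cdots\bigr)$, which you drop), so the corner really is a negative boundary value and the maximum principle closes.

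The genuine gap is Part (2): the sign of $c$ when $a<b$, which is the entire content of the counterexample. You reduce it correctly to showing $c>0$, but the proposed proof---a stress-intensity representation $c=\tfrac{1}{\pi}\int_{\mathcal{P}}\phi$ via a dual singular function, followed by a Hadamard shape derivative in $b$---is only a plan; you explicitly say you do not know how to control the sign of $\phi$ or of the Hadamard integrals, and no obstruction to $c<0$ is actually established (the parenthetical ``maximum-principle obstruction $\dots$ which forbids $c<0$'' is not justified: if $c<0$, the corner blowup of $\partial_{x_1}u$ is positive and the maximum-principle argument simply fails, it does not yield a contradiction). The paper's route is entirely different and worth comparing. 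It introduces the antisymmetric comparison function $w(x_1,x_2)=u(x_1,x_2)-u(x_2,x_1)$ on $\mathcal{D}=\mathcal{P}^{++}\cap\{x_2<x_1\}$, which at the corner expands as $2c_1 r^{2/3}\cos(2\theta/3)+O(r^2)$, and establishes three things: (i) $c_1\neq 0$ whenever $a\neq b$, by assuming $c_1=0$ (so $w\in C^2$ with $\nabla w(p)=0$), showing $v=\partial_{x_2}u+\beta u>0$ off $\Gamma_h$ via Hopf, deducing $w<0$ in $\mathcal{D}$ from a mixed BVP with a strict Robin inequality on $\{x_1=a\}$, and then contradicting $\nabla w(p)=0$ with Serrin's corner lemma at the obtuse angle $3\pi/4$; (ii) at $a=b$ the quantity $\partial_{x_2}(\partial_{x_2}u+\beta u)$ is strictly negative along $\{x_2=b\}$, by Hopf plus a direct second-derivative computation at the convex corner; (iii) varying $a$ downward from $b$, using (ii) and continuity to keep $\partial_{x_2}u+\beta u>0$ near $\{x_2=b\}$, so the same $w$ satisfies the mixed BVP and hence $w<0$, forcing $c_1\leq 0$, which combined with (i) gives $c_1<0$, and then continuity of $c_1$ in $a$ extends this to all $1<a<b$. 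In short, the paper never constructs the dual function $\phi$ nor computes a shape derivative; it gets the sign of the stress-intensity factor purely by maximum-principle comparisons and Serrin's lemma applied to $w$, plus a one-parameter continuity argument. You would need to either carry out your dual-function plan to completion (which is delicate precisely at the point you flag) or adopt this comparison-function strategy.
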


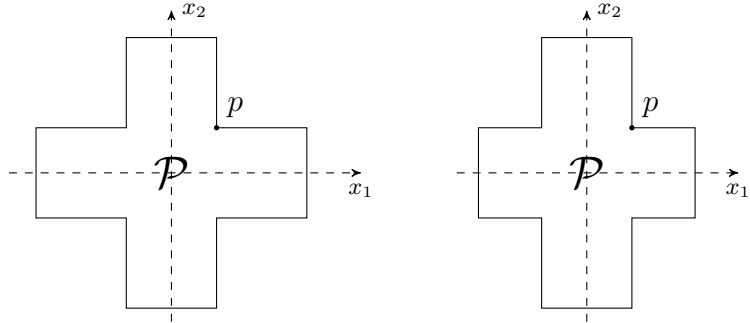
\begin{figure}[htp]\centering
\begin{tikzpicture}[scale=1]
\pgfmathsetmacro\la{0.6}; \pgfmathsetmacro\lb{\la}; \pgfmathsetmacro\ld{\la*3}; \pgfmathsetmacro\lc{\ld};
\draw[] (\la, \lb) -- (\la, \ld) -- ( - \la, \ld) -- ( - \la, \lb) -- ( - \lc, \lb) -- ( - \lc, - \lb) -- ( - \la, - \lb) -- ( - \la, - \ld) -- (\la, - \ld) -- (\la, - \lb) -- (\lc, - \lb) -- (\lc, \lb) -- cycle;
\node at (0, 0) {\Large $\mathcal{P}$};
\fill (\la, \lb) circle (1pt) node [above right] {${p}$};
\draw [dashed, -> ] ( - \lc*1.2, 0) -- (\lc*1.4, 0) node [below] {\scriptsize${x}_{1}$};
\draw [dashed, -> ] (0, - \ld*1.1) -- (0, \ld*1.2) node [right] {\scriptsize${x}_{2}$};
\end{tikzpicture}
\quad\quad
\begin{tikzpicture}[scale=1]
\pgfmathsetmacro\la{0.6}; \pgfmathsetmacro\lb{\la}; \pgfmathsetmacro\ld{\la*3}; \pgfmathsetmacro\lc{\ld*0.8};
\draw[] (\la, \lb) -- (\la, \ld) -- ( - \la, \ld) -- ( - \la, \lb) -- ( - \lc, \lb) -- ( - \lc, - \lb) -- ( - \la, - \lb) -- ( - \la, - \ld) -- (\la, - \ld) -- (\la, - \lb) -- (\lc, - \lb) -- (\lc, \lb) -- cycle;
\node at (0, 0) {\Large $\mathcal{P}$};
\fill (\la, \lb) circle (1pt) node [above right] {${p}$};
\draw [dashed, -> ] ( - \lc*1.2, 0) -- (\lc*1.4, 0) node [below] {\scriptsize${x}_{1}$};
\draw [dashed, -> ] (0, - \ld*1.1) -- (0, \ld*1.2) node [right] {\scriptsize${x}_{2}$};
\end{tikzpicture} \vspace*{ - 1ex}
\caption{Nonconvex polygon $\mathcal{P}$}
\label{fig02}
\end{figure}

\begin{proof}
Let $\Gamma_{h}$ and $\Gamma_{v}$ be the horizontal and vertical boundaries, respectively. That is,
\begin{equation}
\Gamma_{h} = \{ {x} \in \partial\mathcal{P}: \, |{x}_{2}| = 1 \text{ or } {b} \}, \quad
\Gamma_{v} = \{ {x} \in \partial\mathcal{P}: \, |{x}_{1}| = 1 \text{ or } {a} \}.
\end{equation}
Let
\begin{equation}
\mathcal{P}^{ ++ } = \mathcal{P} \cap \{ {x}_{1} > 0, \, {x}_{2} > 0 \}.
\end{equation}
The proof is divided into several parts.

\textbf{Part 1}.
Local behavior of ${u}$ near the vertices. By elliptic theory, the solution ${u}$ is smooth up to the boundary except at the four nonconvex vertices $(\pm 1, \pm 1)$. Since the solution ${u}$ is unique, ${u}$ is symmetric with respect to both coordinate axes. Now we focus on the local behavior near the nonconvex vertex ${p} = (1, 1)$. Set
\begin{equation*}
{y} = ({x}_{1} - 1, \, {x}_{2} - 1), \quad {r} = |{y}|, \quad \theta = \operatorname{arg}({y}) - 2\pi.
\end{equation*}
Note that for points near ${p}$ and in $\mathcal{P}$, the polar angle $\theta$ belongs to the interval $(-3\pi/2, 0)$.
By \cite{Gri85}, the local expansion of ${u}$ near ${x} = {p}$ has the asymptotic form 
\begin{equation}
|{D}^{k}({u} - \Phi)|({y}) = O(|{y}|^{7/3 - {k}})\quad \text{for } {k} = 0, 1, 2,
\end{equation}
\begin{equation}\label{eq305}
\begin{aligned}
\Phi = &\, {c}_{0}\big(1 - \beta ({y}_{1} + {y}_{2}) + \tfrac{1}{2}\beta^{2}({y}_{1}^{2} + {y}_{2}^{2})\big) \\
&\, + {c}_{1}\big( {r}^{2/3}\cos(\tfrac{2}{3}\theta) + \tfrac{3\sqrt{2}}{5}\beta {r}^{5/3}\cos( \tfrac{5}{3}\theta + \tfrac{3\pi}{4} ) \big) \\
&\, + {c}_{2}{r}^{4/3}\cos( \tfrac{4}{3}\theta ) + {c}_{3}{r}^{2}\cos(2\theta),
\end{aligned}
\end{equation}
where ${c}_{i}$ are constants. In the case ${b} = {a}$, ${u}$ is symmetric with respect to the lines ${x}_{1} = \pm {x}_{2}$, which leads to ${c}_{1} = 0$.

\textbf{Part 2}.
We claim that ${u}$ is not $C^{1}$ at the nonconvex vertices (i.e., ${c}_{1} \neq 0$) when ${a} \neq {b}$.
Suppose, for contradiction, that ${a} < {b}$ and ${u}$ is $C^{1}$ up to the boundary, that is,
\begin{equation}\label{eq308}
{c}_{1} = 0.
\end{equation}
Let ${v} = \partial_{{x}_{2}}{u} + \beta {u}$. By \eqref{eq308}, ${v}$ is continuous up to the boundary and satisfies
\begin{equation}
\begin{cases}
- \Delta {v} = \beta & \text{in } \mathcal{P}^{ ++ }, \\
{v} = 0 & \text{on } \partial\mathcal{P}^{ ++ } \cap \{ {x}_{2} = 1, \, {b} \}, \\
{v} > 0 & \text{on } \partial\mathcal{P}^{ ++ } \cap \{ {x}_{2} = 0 \}, \\
\partial_{\nu}{v} + \beta {v} = 0 & \text{on } \partial\mathcal{P}^{ ++ } \cap \{ {x}_{1} = 1, \, {a} \}, \\
\partial_{\nu}{v} = 0 & \text{on } \partial\mathcal{P}^{ ++ } \cap \{ {x}_{1} = 0 \}.
\end{cases}
\end{equation}
By the maximum principle and Hopf lemma,
\begin{equation}\label{eq310}
{v} = \partial_{{x}_{2}}{u} + \beta {u} > 0 \quad \text{in } \overline{\mathcal{P}^{ ++ }} \setminus \Gamma_{h}.
\end{equation}

Let $\mathcal{D} = \mathcal{P}^{ ++ } \cap \{ {x}_{2} < {x}_{1} \}$ and
\begin{equation} \label{eq311a}
{w}({x}_{1}, {x}_{2}) = {u}({x}_{1}, {x}_{2}) - {u}({x}_{2}, {x}_{1}), \quad {x} \in \mathcal{D}.
\end{equation}
By \eqref{eq310}, we get
\begin{equation} \label{eq311}
\begin{cases}
- \Delta {w} = 0 & \text{in } \mathcal{D}, \\
{w} = 0 & \text{on } \partial\mathcal{D} \cap \{ {x}_{1} = {x}_{2} \}, \\
\partial_{\nu}{w} = 0 & \text{on } \partial\mathcal{D} \cap \{ {x}_{2} = 0 \}, \\
\partial_{\nu}{w} + \beta {w} < 0 & \text{on } \partial\mathcal{D} \cap \{ {x}_{1} = {a} \}, \\
\partial_{\nu}{w} + \beta {w} = 0 & \text{on } \partial\mathcal{D} \cap \{ {x}_{2} = 1 \}.
\end{cases}
\end{equation}
The maximum principle implies that ${w} < 0$ in $\mathcal{D}$, and the Hopf lemma implies
\begin{equation} \label{eq312}
{u}({x}_{1}, {x}_{2}) < {u}({x}_{2}, {x}_{1}), \quad {x} \in \overline{\mathcal{D}} \setminus \{ {x}_{1} = {x}_{2} \}.
\end{equation}
From \eqref{eq308} and symmetry, we get that the difference function ${w}$ is of class $C^{2}(\overline{\mathcal{D}})$, and
\begin{equation}\label{eq314}
{w} = 2{c}_{3}({y}_{1}^{2} - {y}_{2}^{2}) + O ( |{y}|^{7/3} ).
\end{equation}
Note that the interior angle of $\mathcal{D}$ at ${p}$ is obtuse. Applying Serrin's lemma (see \autoref{lemS}) to ${w}$ in $\mathcal{D}$, we obtain
\begin{equation*}
\nabla {w} \cdot (0, - 1) < 0 \quad \text{at } {y} = 0.
\end{equation*}
This yields a contradiction to \eqref{eq314}. Hence, Part 2 follows.

\textbf{Part 3}.
${u}$ satisfies
\begin{equation} \label{eq316}
\partial_{{x}_{2}} ( \partial_{{x}_{2}}{u} + \beta {u} ) < 0 \quad \text{on } \overline{\mathcal{P}} \cap \{ {x}_{2} = {b} \} \text{ when } {b} = {a}.
\end{equation}
Indeed, since ${a} = {b}$, ${u}$ is symmetric with respect to both lines $\{ {x}_{1} = \pm {x}_{2} \}$, so the coefficient ${c}_{1}$ in \eqref{eq305} vanishes. Hence, similar to the derivation of \eqref{eq310},
\begin{equation} \label{eq317}
\begin{aligned}
{v} & = \partial_{{x}_{2}}{u} + \beta {u} > 0 \quad \text{in } ( \overline{\mathcal{P}} \setminus \Gamma_{h} ) \cap \{ {x}_{2} \geq 0 \},
\\
{v} & = \partial_{{x}_{2}}{u} + \beta {u} = 0 \quad \text{for } {x}_{2} = {b},
\end{aligned}
\end{equation}
and the superharmonic function ${v}$ attains its minimum zero along ${x}_{2} = {b}$. Applying the Hopf lemma to ${v}$, we get
\begin{equation} \label{eq319}
\partial_{{x}_{2}} ( \partial_{{x}_{2}}{u} + \beta {u} ) < 0 \quad \text{for } {x}_{2} = {b}, \, {x}_{1} \in ( - 1, 1).
\end{equation}
Now consider \eqref{eq316} at the corner $(1, {b})$. From the Robin boundary condition for ${u}$, we have $\partial_{{x}_{2}} {u} + \beta {u} = 0$ on ${x}_{2} = {b}$. This gives
\begin{equation*}
\partial_{{x}_{1}{x}_{1}} ( \partial_{{x}_{2}} {u} + \beta {u} ) = 0 \quad \text{for } {x}_{2} = {b}, \, {x}_{1} \in [ - 1, 1].
\end{equation*}
Combining this with the equation for ${u}$, we have
\begin{equation*}
\partial_{{x}_{2}{x}_{2}} ( \partial_{{x}_{2}} {u} + \beta {u} ) = \Delta ( \partial_{{x}_{2}} {u} + \beta {u} ) = - \beta < 0 \quad \text{for } {x}_{2} = {b}, \, {x}_{1} \in [ - 1, 1].
\end{equation*}
Combining this with \eqref{eq317}, we see that \eqref{eq316} also holds at the corner $(1, {b})$.

\textbf{Part 4}.
We claim that the coefficient ${c}_{1}$ in \eqref{eq305} is negative when $1 < {a} < {b}$.

Indeed, fix ${b} \in (1, \infty)$ and vary ${a} \in (1, {b})$. By Part 3 and continuity, there exists a small constant $\epsilon > 0$ such that if ${a} \in ({b} - \epsilon, {b})$, then the corresponding solution ${u}$ satisfies
\begin{equation*}
\partial_{{x}_{2}} ( \partial_{{x}_{2}}{u} + \beta {u} ) < 0 \quad \text{on } \overline{\mathcal{P}} \cap \{ {a} \leq {x}_{2} \leq {b} \}.
\end{equation*}
Combining this with $\partial_{{x}_{2}}{u} + \beta {u} = 0$ at ${x}_{2} = {b}$, we have
\begin{equation}
\partial_{{x}_{2}}{u} + \beta {u} > 0 \quad \text{on } \overline{\mathcal{P}} \cap \{ {a} \leq {x}_{2} < {b} \}.
\end{equation}
It follows that the difference function ${w}$ in \eqref{eq311a} is continuous and satisfies \eqref{eq311}. Hence, we obtain
\begin{equation*}
{w}({x}_{1}, {x}_{2}) = {u}({x}_{1}, {x}_{2}) - {u}({x}_{2}, {x}_{1}) < 0 \quad \text{for } {x} \in \overline{\mathcal{P}} \text{ with } {x}_{1} > {x}_{2} \geq 0.
\end{equation*}
From Parts 1 and 2,
\begin{gather*}
{w}({y}) = 2{c}_{1}{r}^{2/3}\cos( \tfrac{2}{3}\theta ) + O({r}^{2}), \\
{c}_{1} \neq 0 \text{ whenever } {a} \neq {b}.
\end{gather*}
Therefore, ${c}_{1} < 0$ when ${a} \in ({b} - \epsilon, {b})$. As the coefficient ${c}_{1}$ depends continuously on ${a}$, we deduce that ${c}_{1} < 0$ holds whenever $1 < a<b$.

\textbf{Part 5}.
The monotonicity property.
From Part 1, the derivatives of ${u}$ near the vertex $p=(1,1)$ have the following properties:
\begin{equation}\label{eq321}
\begin{aligned}
{u}_{{x}_{1}} = &\, \tfrac{2}{3} {c}_{1} {r}^{ - 1/3} \cos( \tfrac{1}{3} \theta ) - {c}_{0} \beta + O({r}^{1/3}), \\
{u}_{{x}_{2}} = &\, \tfrac{2}{3} {c}_{1} {r}^{ - 1/3} \sin( \tfrac{1}{3} \theta ) - {c}_{0} \beta + O({r}^{1/3}).
\end{aligned}
\end{equation}

We first consider the case when ${a} < {b}$. From Part 4, ${c}_{1} < 0$. Combining this with \eqref{eq321}, there exists a neighborhood $\mathcal{O}_{p}$ of ${p} = (1, 1)$ such that
\begin{equation}
{u}_{{x}_{1}} < 0 \quad \text{in } \mathcal{P} \cap \mathcal{O}_{p}, \quad
{u}_{{x}_{2}} > 0 \quad \text{in } \mathcal{P} \cap \mathcal{O}_{p}.
\end{equation}
This concludes the proof of item \ref{eq3itemB}. 
Recall that ${u}_{{x}_{1}}$ satisfies
\begin{equation} \label{eq323}
\begin{cases}
- \Delta {u}_{{x}_{1}} = 0 & \text{in } \mathcal{P}^{ ++ }, \\
{u}_{{x}_{1}} < 0 & \text{on } \partial\mathcal{P}^{ ++ } \cap \{ {x}_{1} = 1, \, {a} \}, \\
{u}_{{x}_{1}} = 0 & \text{on } \partial\mathcal{P}^{ ++ } \cap \{ {x}_{1} = 0 \}, \\
\partial_{\nu}{u}_{{x}_{1}} + \beta {u}_{{x}_{1}} = 0 & \text{on } \partial\mathcal{P}^{ ++ } \cap \{ {x}_{2} = 1, \, {b} \}, \\
\partial_{\nu}{u}_{{x}_{1}} = 0 & \text{on } \partial\mathcal{P}^{ ++ } \cap \{ {x}_{2} = 0 \}.
\end{cases}
\end{equation}
By applying the maximum principle to ${u}_{{x}_{1}}$ in $\mathcal{P}^{ ++ } \setminus \mathcal{O}_{p}$, we conclude that ${u}_{{x}_{1}}$ is negative in $\mathcal{P}^{ ++ } \setminus \mathcal{O}_{p}$, and hence 
\begin{equation} \label{eq324a}
\partial_{{x}_{1}}{u} < 0 \quad \text{in } \mathcal{P} \cap \{ {x}_{1} > 0 \}.
\end{equation}

It remains for us to consider the case when ${a} = {b}$. Then ${c}_{1} = 0$, and ${u}$ is $C^{1}$ up to the boundary. One can then directly apply the maximum principle to \eqref{eq323} in $\mathcal{P}^{ ++ }$ to conclude \eqref{eq324a}. By symmetry, $\partial_{{x}_{2}}{u} < 0$ in $\mathcal{P} \cap \{ {x}_{2} > 0 \}$. This completes the proof of item \ref{eq3itemA}. 

In the foregoing proof, we also established monotonicity in the ${x}_{1}$-direction; namely, \eqref{eq324a} holds whenever ${a} \leq {b}$. This completes the proof in its entirety. 
\end{proof}

We now present the smooth counterexample, obtained via a small perturbation of the polygonal configuration analyzed in \autoref{prop31}.

\begin{proof}[Proof of \autoref{thm13CouterExam}]
Let ${v}$ be the $\beta$-torsion function on the nonconvex polygon $\mathcal{P} = \mathcal{P}_{{a}, {b}}$ with any fixed ${b} > {a} > 1$. By \autoref{prop31}, $\partial_{{x}_{2}}{v} > 0$ near the corner $(1, 1)$. Therefore, there exist a small constant $\bar{\delta} > 0$, and two distinct points $\bar{p} = (1 - \bar{\delta}, \, 1 + \bar{\delta})$ and $\bar{q} = (1 - \bar{\delta}, 1 - \bar{\delta})$ in $\mathcal{P}$ such that
\begin{equation} \label{eq328a}
{v}(\bar{p}) > {v}(\bar{q}).
\end{equation}
Let $3\varepsilon = {v}(\bar{p}) - {v}(\bar{q}) > 0$, and let ${K}$ be any fixed compact subset of $\mathcal{P}$ containing both $\bar{p}$ and $\bar{q}$. One can find a smooth domain $\Omega$ (satisfying \ref{eq01itema}) that is close to $\mathcal{P}$ such that the $\beta$-torsion function ${u}$ on $\Omega$ satisfies $\| {u} - {v} \|_{C({K})} < \varepsilon$. Combining this with \eqref{eq328a}, we get
\begin{equation} 
{u}(\bar{p}) - {u}(\bar{q}) > 0.
\end{equation}
Consequently, there exists a point in the upper portion of the domain at which $\partial_{{x}_{2}}{u} > 0$. This completes the proof.
\end{proof}


\section*{Acknowledgments}

Research of Qinfeng Li was supported by National Natural Science Fund of China for Excellent Young Scholars (No. 12522109), National Key R\&D Program of China (2022YFA1006900) and the National Science Fund of China General Program (No. 12471105). Research of Juncheng Wei was supported by GRF fund of RGC of Hong Kong entitled ``New frontiers in singularity formations of nonlinear partial differential equations''. Research of Ruofei Yao was supported by Guangdong Basic and Applied Basic Research Foundation (Grant No. 2025A1515011856).

%

\end{document}